\let\overfence\overbrace 
\let\downfencefill\downbracefill 
\patchcmd{\overfence}{\downbracefill}{\downfencefill}{}{}
\patchcmd{\downfencefill}{\braceru \bracelu}{}{}{}
\apptocmd{\sloppy}{\hbadness 10000\relax}{}{}
\newcommand{\comment}[1]{} 
\def\Z{\mathbb{Z}}
\def\C{\mathbb{C}}
\def\topdf{\texorpdfstring}
\theoremstyle{plain}
\newtheorem{thm}[equation]{Theorem}
\newtheorem{lem}[equation]{Lemma}
\newtheorem{coro}[equation]{Corollary} 
\newtheorem{prop}[equation]{Proposition}
\theoremstyle{definition}
\newtheorem{question}{Question}
\newtheorem{ex}[equation]{Example}
\theoremstyle{remark} 
 \newtheorem{rem}[equation]{Remark}
  \numberwithin{equation}{section}
\newtheorem*{ack}{Acknowledgements}
\newcommand{\cK}{\mathcal K}
\newcommand{\cL}{\mathcal L}
\newcommand{\cO}{\mathcal O}
\newcommand{\cR}{\mathcal R}
\newcommand{\cU}{\mathcal U}
\def\fA{\mathfrak{A}}
\def\fB{\mathfrak{B}}
\def\fC{\mathfrak{C}}
\def\fF{\mathfrak{F}}
\def\fI{\mathfrak{I}}
\def\fJ{\mathfrak{J}}
\newcommand{\BF}{\fB\fF}
\def\ab{\mathfrak{Ab}}
\def\leav{\operatorname{Leavitt}}
\def\can{\operatorname{can}}
\def\vf{\operatorname{vf}}
\def\alg{\mathrm{Alg}}
\def\sotimes{\overset{\sim}{\otimes}}
\newcommand{\aha}{{\alg_{\C}}}
\newcommand{\ahas}{{{\rm Alg}^*_{\C}}}
\newcommand{\lra}{\longrightarrow}
\newcommand{\iso}{\overset{\sim}{\lra}}
\newcommand{\onto}{\twoheadrightarrow}
\def\triqui{\vartriangleleft}
\def\inc{\operatorname{inc}}
\def\ad{\operatorname{ad}}
\def\id{\operatorname{id}}
\newcommand{\coker}{{\rm Coker}}
\renewcommand{\ker}{{\rm Ker}}
\newcommand{\im}{\mathrm{Im}}
\newcommand{\op}{\mathrm{op}}
\DeclareMathOperator*{\colim}{colim}
\def\comp{\operatorname{comp}}
\def\compu{\widetilde{\comp}}
\renewcommand{\top}{\operatorname{top}}
\begin{document}
\hfuzz=22pt
\vfuzz=22pt
\hbadness=2000
\vbadness=\maxdimen

\author{Guillermo Cortiñas}
\thanks{Supported by CONICET and partially supported by grants PICT 2017-1395 from Agencia Nacional de Promoci\'on Cient\'\i fica y T\'ecnica, UBACyT 0150BA from Universidad de Buenos Aires, and PGC2018-096446-B-C21 from Ministerio de Ciencia e Innovaci\'on}
\title{Lifting graph $C^*$-algebra maps to Leavitt path algebra maps}

\begin{abstract} 
Let $\xi:C^*(E)\to C^*(F)$ be a unital $*$-homomorphism between simple purely infinite Cuntz-Krieger algebras of finite graphs. We prove that there exists a unital $*$-homomorphism $\phi:L(E)\to L(F)$ between the corresponding Leavitt path-algebras such that $\xi$ is homotopic to the map $\hat{\phi}:C^*(E)\to C^*(F)$ induced by completion. We show moreover that $\hat{\phi}$ is a homotopy equivalence in the $C^*$-algebraic sense if and only if $\phi$ is a homotopy equivalence in the algebraic, polynomial sense. We deduce, in particular, that any isomorphism between simple purely infinite Cuntz-Krieger algebras is homotopic to the completion of a unital algebraic homotopy equivalence. 
\end{abstract}

\maketitle

\section{Introduction}\label{sec:intro}

A finite, regular, directed graph consists of finite sets $E^0$ and $E^1$ of vertices and edges and maps $r,s:E^1\to E^0$
with $s$ surjective. We write $L(E)$ for the Leavitt path algebra \cite{lpabook} of $E$ over $\C$, which is a $*$-algebra, and $C^*(E)$ for its $C^*$-completion. We say  that $E$ is \emph{simple purely infinite}, or \emph{spi}, if it satisfies the hypothesis of the purely infinite simplicity theorem \cite{lpabook}*{Theorem 3.1.10}, which by that theorem are equivalent to the purely infinite simplicity of $L(E)$ and also to that of $C^*(E)$ (see \cite{lpabook}*{Section 5.6}). A theorem of Cuntz and R\o rdam \cite{ror}*{Theorem 6.5}, which is now a particular case of the Kirchberg-Phillips theorem \cite{chris}*{Theorem 4.1.1 and Corollary 4.2.2}, says that the group $K_0(C^*(E))$ scaled by the class $[C^*(E)]$ of its free module of rank one, is a complete invariant for the isomorphism class of the $C^*$-algebra of a finite spi graph $E$. The classification question for Leavitt path algebras \cite{quest}, restricted to $\C$-algebras, asks whether a similar result holds for the Leavitt path algebra $L(E)$. Since the scaled groups $(K_0(C^*(E)),[C^*(E)])$ and $(K_0(L(E)),[L(E)])$ are isomorphic, the above question can be restated as follows:
\begin{question}
For finite spi graphs $E$ and $F$, does the existence of a $*$-isomorphism $C^*(E)\cong C^*(F)$ imply that of an algebra isomorphism $L(E)\cong L(F)$?
\end{question}
By \cite{chris}*{Theorem 4.1.1 and Corollary 4.2.2}, one may equivalently substitute unital $C^*$-homotopy equivalence for $C^*$-algebra isomorphism in the question above. By \cite{cm2}*{Theorem 6.1}, $C^*(E)\cong C^*(F)$ implies that $L(E)$ and $L(F)$ are homotopy equivalent in the algebraic sense; by \cite{classinvo}*{Theorem 14.1} one can further conclude that an involution preserving homotopy equivalence exists upon hyperbolic matricial stabilization. A main result of the current paper is the following.

\begin{thm}\label{intro:semimain} Let $E$ and $F$ be finite spi graphs and let $\xi:C^*(E)\to C^*(F)$ be a unital $*$-homomorphism. Then there exists a unital $*$-homomorphism $L(E)\to L(F)$ whose completion $\hat{\phi}:C^*(E)\to C^*(F)$ is $C^*$-homotopic to $\xi$. Moreover if $\xi$ is a homotopy equivalence in the $C^*$-algebra sense, then $\phi$ is a homotopy equivalence in the algebraic, polynomial sense. 
\end{thm}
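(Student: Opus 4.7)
The plan is to route both assertions through $K$-theory. On the $C^*$-side, the Kirchberg-Phillips theorem \cite{chris}*{Theorem 4.1.1 and Corollary 4.2.2} classifies unital $*$-homomorphisms between simple purely infinite Cuntz-Krieger algebras up to $C^*$-homotopy by the induced pointed map on $K_0$, and identifies the $C^*$-homotopy equivalences with those unital $*$-homomorphisms whose induced pointed $K_0$-map is an isomorphism. Since the canonical inclusion $L(E) \hookrightarrow C^*(E)$ identifies $(K_0(L(E)),[L(E)])$ with $(K_0(C^*(E)),[C^*(E)])$, the pointed $K_0$-map of $\xi$ can be regarded as a pointed homomorphism $K_0(L(E)) \to K_0(L(F))$. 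The problem thus reduces to two algebraic statements mirroring the $C^*$-classification: an \emph{existence} statement, that every pointed group homomorphism $K_0(L(E)) \to K_0(L(F))$ lifts to a unital $*$-homomorphism $L(E) \to L(F)$, and a \emph{uniqueness} statement, that two unital $*$-homomorphisms $L(E) \to L(F)$ inducing the same pointed $K_0$-map are polynomially homotopic.

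Granting both, the first assertion is immediate: pick $\phi$ lifting $K_0(\xi)$; then $\hat{\phi}$ and $\xi$ agree on pointed $K_0$ and are thus $C^*$-homotopic by Kirchberg-Phillips. For the moreover statement, assume $\xi$ is a $C^*$-homotopy equivalence; then $K_0(\phi)=K_0(\xi)$ is a pointed isomorphism. Apply the existence statement to its inverse to obtain $\psi: L(F) \to L(E)$. Now $\psi\phi$ and $\id_{L(E)}$ (respectively $\phi\psi$ and $\id_{L(F)}$) induce the same pointed $K_0$-map, so the algebraic uniqueness statement yields polynomial homotopies $\psi\phi \sim \id_{L(E)}$ and $\phi\psi \sim \id_{L(F)}$, exhibiting $\phi$ as a polynomial homotopy equivalence.

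The main obstacle is to establish the algebraic existence and uniqueness results for unital $*$-homomorphisms between Leavitt path algebras of spi graphs. The $C^*$-proofs in \cite{chris} rely on polar decomposition, asymptotic unitary equivalence and $\mathcal{O}_\infty$-absorption, none of which have literal polynomial analogues. Available inputs are \cite{cm2}*{Theorem 6.1} and \cite{classinvo}*{Theorem 14.1}, which produce algebraic (and involution-preserving) homotopy equivalences from pointed $K_0$-isomorphisms, but only after matricial stabilization and only in the isomorphism case; the real work is to remove the stabilization and to pass from isomorphisms to arbitrary unital $*$-homomorphisms while keeping compatibility with $C^*$-completion. A plausible route combines algebraic bivariant $kk$-theory, in which Hom-sets between Kirchberg-type spi algebras are controlled by their pointed $K_0$, with explicit graph-level constructions (out-splittings, Cuntz splices) producing honest unital $*$-homomorphisms and polynomial homotopies that realize the underlying moves, together with an absorption argument internal to $L(E)$ and $L(F)$ exploiting purely infinite simplicity to soak up the necessary matrix amplifications.
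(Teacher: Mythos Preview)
Your reduction has two genuine gaps, one at each end.

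\medskip

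\textbf{The $C^*$-side reduction is too coarse.} Kirchberg--Phillips classifies unital $*$-homomorphisms between unital Kirchberg algebras up to asymptotic unitary equivalence (hence $C^*$-homotopy) by their class in $KK(C^*(E),C^*(F))$, not by the pointed $K_0$-map alone. For a Cuntz--Krieger algebra one has $K_1(C^*(E))=\ker(I-A_E^t)$, which is free abelian of the same rank as $\BF(E)$; when this rank is positive the UCT sequence shows that the pointed $K_0$-map does not determine the $KK$-class. So your first paragraph already loses information in the cases where $\BF(E)$ is infinite. The paper handles this by working directly in $KK$: given $\xi$, it first lifts $K_0(\xi)$ to a $*$-homomorphism $\phi:L(E)\to L(F)$ via \cite{classinvo}*{Theorem 9.4}, and then corrects the remaining $KK$-discrepancy (which lives in the $K_1$-part) by twisting $\phi$ by an explicit unitary of $L(F)$, producing $\phi^u$ with $k(\widehat{\phi^u})=\xi$ exactly.

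\medskip

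\textbf{The algebraic ``uniqueness'' statement is false.} You propose to show that two unital $*$-homomorphisms $L(E)\to L(F)$ with the same pointed $K_0$ (or even the same $KK$-class after completion) are polynomially homotopic, and then use this to conclude $\psi\phi\sim\id$ and $\phi\psi\sim\id$. But for spi $E,F$ one has $[L(E),L(F)]_{M_2}\setminus\{0\}\cong kk(L(E),L(F))$, and the paper computes the kernel of the comparison map
\[
\compu:kk(L(E),L(F))\longrightarrow KK(C^*(E),C^*(F))
\]
to be $\BF(E)^{\vee}\otimes\BF(F)\otimes\C^*$. This is nonzero whenever both Bowen--Franks groups have positive rank, so distinct polynomial homotopy classes can have the same $KK$-class after completion. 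No amount of absorption or graph moves will repair this: the obstruction is intrinsic to the size of $K_1(L(F))$ versus $K_1^{\top}(C^*(F))$.

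The paper replaces your uniqueness step with a much weaker---and true---statement: \emph{conservativity}. It shows that the kernel of $\compu$ on the endomorphism ring $kk(L(E),L(E))$ is a square-zero ideal (by factoring its generators through the Cohn extension and using $kk_{-1}(C(E),\C)=0$). Hence if $\compu(j(\phi))$ is invertible in $KK$ then $j(\phi)$ is invertible in $kk$, and then \cite{cm2}*{Theorem 5.8} upgrades $kk$-invertibility to an $M_2$-polynomial homotopy equivalence, which for unital maps is a genuine homotopy equivalence. This sidesteps uniqueness entirely: one never needs to know that the specific composite $\psi\phi$ is homotopic to the identity via a $K$-theory argument, only that $\phi$ becomes invertible in $kk$.
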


We point out that in the theorem above, algebraic homotopy equivalence is understood in the sense of not necessarily involution preserving algebra homomorphisms. Thus the $*$-homomorphism $\phi$ of the theorem is a homotopy equivalence if there exists a --not necessarily involution preserving-- algebra homomorphism $\psi:L(F)\to L(E)$ such that the composites $\psi\circ\phi$ and $\phi\circ\psi$ are algebraically homotopic to the respective identity maps.

The main tools we use to prove the theorem above are Kasparov's bivariant $K$-theory of $C^*$-algebras and the algebraic bivariant $K$-theory introduced in \cite{ct}. The latter consists of a triangulated category $kk$ and a functor $j:\aha\to kk$ from the category of algebras, which is algebraically homotopy invariant, maps algebra extensions to distinguished triangles and is matricially stable, and is universal initial with these properties. Similarly, Kasparov's $K$-theory can also be described as consisting of triangulated category $KK$ and a functor $k:C^*-\alg\to KK$ from the category of separable $C^*$-algebras, which is homotopy invariant and stable in the $C^*$-algebra sense, and maps those extensions admitting completely positive splittings to distinguished triangles, and is universal initial with these properties \cite{ralf}. Write $\ahas$ for the category of $*$-algebras and $*$-homomorphisms and 
\[
\leav^*\subset\ahas \text{ and } \langle\leav\rangle_{kk}\subset kk
\]
for the full subcategories on the Leavitt path algebras of finite regular graphs.

A key step in proving Theorem \ref{intro:semimain} above is the following, which is also a main result of the article.

\begin{thm}\label{intro:onto}
Let $\widehat{\ \ }:\leav^*\to C^*-\alg$ be the completion functor. There is a $\Z$-linear, full and conservative functor $\compu:\langle\leav\rangle_{kk}\to KK$ such that the following diagram commutes
\[
\xymatrix{\leav^*\ar[d]^j\ar[r]^{\widehat{\ \ }}& C^*-\alg\ar[d]^k\\
          \langle\leav\rangle_{kk}\ar[r]_(.6){\compu}& KK}
\]
\end{thm}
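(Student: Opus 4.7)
My plan is to construct $\compu$ via the universal property of $kk$ and then establish fullness and conservativity through a comparison of bivariant $K$-groups on the two sides. For the construction, consider the composite $T := k \circ \widehat{\ \ }: \leav^* \to KK$. One verifies that $T$ is algebraically homotopy invariant, since a polynomial homotopy $L(E) \to L(F)[t]$ extends after completion to a $C^*$-homotopy $C^*(E) \to C([0,1], C^*(F))$; matricially stable, since $\widehat{M_nL(E)} = M_n(C^*(E))$ and $C^*$-algebras are $C^*$-stable in $KK$; and excisive on extensions of Leavitt path algebras coming from hereditary saturated subsets of vertices, because these complete to $C^*$-extensions admitting completely positive cross-sections. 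To promote $T$ to a functor $\compu$ on $\langle\leav\rangle_{kk}$, I would either extend $T$ to a functor on all of $\aha$ by a universal construction and then apply the universal property of $j: \aha \to kk$, or, more concretely, represent every $kk$-morphism between Leavitt path algebras as a roof of $*$-homomorphisms (after matricial stabilization and loop-shift) and apply $k\circ\widehat{\ \ }$ componentwise, checking well-definedness on equivalences of roofs. $\Z$-linearity is then automatic, since the abelian group structure on $kk$- and $KK$-hom sets comes from the triangulated structure, which $\compu$ preserves by construction.

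For fullness and conservativity, the crucial input is that $C^*(E)$ and $C^*(F)$, being Cuntz--Krieger algebras of finite spi graphs, are Kirchberg algebras in the UCT bootstrap class, so $KK(C^*(E), C^*(F))$ is controlled by $K$-theoretic data via the Rosenberg--Schochet Universal Coefficient Theorem. On the algebraic side, there is a parallel Pimsner--Voiculescu-type distinguished triangle in $kk$ built from the incidence matrix of $E$, identifying $kk(L(E), L(F))$ with essentially the same $K$-theoretic data via known computations of bivariant algebraic $K$-theory for Leavitt path algebras. Because $\compu$ intertwines these two triangles and induces the identity on $K$-theory (the $K_0$ and $K_1$ of $L(E)$ and $C^*(E)$ agree under completion), applying $\Hom$ into the respective target and the five lemma shows that $\compu$ is a group isomorphism on every hom-set. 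In particular, $\compu$ is full, and conservativity follows since an isomorphism of hom-groups reflects isomorphisms.

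The main obstacle will be the construction step: verifying that $\compu$ defines an honest functor on $\langle\leav\rangle_{kk}$ rather than just a map on hom-sets requires a delicate categorical argument, since a generic $kk$-morphism between Leavitt path algebras need not arise from any $*$-homomorphism, and the roof-representation of such morphisms must be shown to transport consistently through completion. Once functoriality is established, the fullness and conservativity conclusions reduce to bookkeeping with the Pimsner--Voiculescu triangles and the scaled $K_0, K_1$-data on both sides, a comparison that is essentially formal.
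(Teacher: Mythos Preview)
Your fullness/conservativity argument contains a genuine error. You assert that ``the $K_0$ and $K_1$ of $L(E)$ and $C^*(E)$ agree under completion'' and then invoke a five-lemma comparison of Pimsner--Voiculescu triangles to conclude that $\compu$ is bijective on every hom-set. The $K_1$ claim is false over $\C$: there is an exact sequence
\[
0\to \BF(F)\otimes\C^*\to K_1(L(F))\to \ker(I-A_F^t)\to 0,
\]
while $K_1^{\top}(C^*(F))=\ker(I-A_F^t)$. The extra factor $\BF(F)\otimes\C^*$ comes from the units of the ground field and is annihilated upon passing to topological $K$-theory. Hence $\compu$ is genuinely not injective on hom-sets: its kernel is $\BF(E)^\vee\otimes\BF(F)\otimes\C^*$. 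Fullness still holds (the map is surjective), but your route to conservativity via bijectivity collapses. The paper proves conservativity instead by showing that the kernel $\fJ$ of the ring map $\compu:kk(L(E),L(E))\to KK(C^*(E),C^*(E))$ satisfies $\fJ^2=0$; the point is that generators of $\fJ$ are composites $\eta\circ\xi$ with $\xi\in kk_{-1}(L(E),\C)$ and $\eta$ in the image of $K_1(C(E))\to K_1(L(E))$ for the Cohn algebra $C(E)$, and one checks $\xi\circ\eta$ factors through $kk_{-1}(C(E),\C)\cong K_{-1}(\C)^{E^0}=0$.

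Your construction of $\compu$ is also different in spirit from the paper's. The universal property of $j:\aha\to kk$ does not apply directly, since completion is only defined on $\leav^*$; you acknowledge this and fall back on a roof representation whose well-definedness you leave unverified. The paper bypasses this entirely by defining $\compu$ on hom-sets via Poincar\'e duality: one has $kk(L(E),\fA)\cong KH_1(\fA\otimes L(E_t))$ and $KK(C^*(E),\fA)\cong K_1^{\top}(\fA\sotimes C^*(E_t))$, and $\compu$ is induced by the completion map $L(E_t)\to C^*(E_t)$ followed by the comparison $KH_*\to K_*^{\top}$. Functoriality is then verified by an explicit cup-product computation in this framework, not by abstract universality.
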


The rest of this article is organized as follows. In Section \ref{sec:extend} we consider, for a finite regular graph $E$ and a $C^*$-algebra $\fA$, the canonical bijection $\hom_{\ahas}(L(E),\fA)\iso\hom_{C^*-\alg}(C^*(E),\fA)$. Proposition \ref{prop:comp} extends the latter to a surjective natural transformation of functors $C^*-\alg\to\ab$,
\begin{equation}\label{intro:mapcompa}
\comp_{\fA}:kk(L(E),\fA)\to KK(C^*(E),\fA),
\end{equation}
which is an isomorphism in certain cases, for example when $\fA$ is either stable of properly infinite (\ref{ex:comp}).
In Section \ref{sec:functor} we prove Theorem \ref{thm:onto}, which contains Theorem \ref{intro:onto}. Observe that the prescription that the diagram of the latter theorem be commutative dictates the definition of $\compu$ on objects as $\compu(L(E))=C^*(E)$. To define $\compu$ on homomorphisms we compose the map
$kk(L(E),L(F))\to kk(L(E),C^*(F))$ with the natural transformation $\comp_{C^*(F)}$ of \eqref{intro:mapcompa}.

Two $C^*$-algebra homomorphisms $\phi,\psi:\fA\to\fB$ are \emph{$M_2$-homotopic} if they become $C^*$-homotopic upon composing with the inclusion $\iota_2:\fB\to M_2\fB$ into the upper left hand corner; algebraic $M_2$-homotopy between maps in $\aha$ is defined similarly. In Section \ref{sec:spi} we prove Theorem \ref{thm:semimain}, which contains Theorem \ref{intro:semimain}, and in addition says that completion sends $*$-homomorphisms satisfying a strong fullness assumption onto $M_2$-homotopy classes of nonzero $C^*$-algebra homomorphisms $C^*(E)\to C^*(F)$, and that $\hat{\phi}$ is an $M_2$-homotopy equivalence in $C^*-\alg$ if and only if $\phi$ is one in $\aha$. 
 
In Appendix \ref{sec:M2} we prove the technical Lemma \ref{lem:m2htpy} which says that if $\fA$ and $\fB$ are $C^*$-algebras with $\fB$ properly infinite, then the monoid of homotopy classes of $*$-homomorphisms from $\fA$ to the stable $C^*$-algebra $\cK\sotimes\fB$ is equivalent to the monoid of $M_2$-homotopy classes of $*$-homomorphisms $\fA\to\fB$. 

\begin{ack} The author wishes to thank Eusebio Gardella for a useful email exchange related to the subject of Appendix 
\ref{sec:M2}. Thanks also to Guido Arnone for his careful reading of the manuscript and helpful comments and suggestions.
\end{ack}

\section{Extending completion to a natural transformation \topdf{$kk(L(E),\fA)\to KK(C^*(E),\fA)$}{comp}}\label{sec:extend}

All algebras considered in this paper are over $\C$. If $A$ is an algebra, we write $K_*(A)$, $KV_*(\fA)$ and $KH_*(A)$ for its Quillen, Karoubi-Villamayor and Weibel's homotopy algebraic $K$-theory (\cite{kv1},\cite{kh},\cite{friendly}). If $\fA$ is a $C^*$-algebra, we further consider its Bott-periodic topological $K$-theory $K_*^{\top}(\fA)$. 

\begin{lem}\label{lem:factor}
Let $\fA$ be a $C^*$-algebra. For $n\in\Z$, let $K_n(\fA)\to K_n^{\top}(\fA)$ and $K_n(\fA)\to KH_n(\fA)$ be the canonical comparison maps. Then there is a natural map $KH_n(\fA)\to K_n^{\top}(\fA)$ making the following diagram commute.
\[
\xymatrix{K_n(\fA)\ar[dr]\ar[rr]&&K_n^{\top}(\fA)\\
&KH_n(\fA)\ar[ur]&}
\]
\end{lem}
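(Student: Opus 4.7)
\emph{Proof plan.} The approach is to use Weibel's simplicial model for $KH$ combined with the polynomial (and in fact continuous) homotopy invariance of topological $K$-theory. Recall that for any $\C$-algebra $A$, $KH$ is the geometric realization of the simplicial spectrum
\[
[p]\ \longmapsto\ K(A\otimes_\C \C[\Delta^p]),
\]
where $\C[\Delta^p]:=\C[t_0,\dots,t_p]/(\textstyle\sum_i t_i-1)$ is the coordinate ring of the algebraic $p$-simplex, and the natural map $K(A)\to KH(A)$ comes from the $0$-simplex inclusion. For a $C^*$-algebra $\fA$, the first step is to produce, for each $p\ge 0$, a natural $*$-algebra embedding $\fA\otimes_\C \C[\Delta^p]\hookrightarrow C(|\Delta^p|,\fA)$, where $|\Delta^p|\subset\R^{p+1}$ is the standard topological simplex; this is just the interpretation of polynomials as continuous functions, and it is cosimplicial in $p$ because the affine face and degeneracy maps of $|\Delta^\bullet|$ pull polynomial functions back to polynomial functions.

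Next, I would apply $K^\top$ to obtain a simplicial spectrum $[p]\mapsto K^\top(C(|\Delta^p|,\fA))$. Because each $|\Delta^p|$ is contractible, continuous-homotopy invariance of $K^\top$ forces every face and degeneracy map to be a weak equivalence, and evaluation at any vertex gives a canonical equivalence with $K^\top(\fA)$. The standard fact that a simplicial spectrum all of whose structure maps are weak equivalences has geometric realization equivalent to its degree-zero term via the $0$-simplex inclusion then yields a canonical weak equivalence $\bigl|\,K^\top(C(|\Delta^\bullet|,\fA))\,\bigr|\simeq K^\top(\fA)$.

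Composing the levelwise comparison $K\to K^\top$ with the cosimplicial embedding above, and taking realizations, produces a natural transformation
\[
KH(\fA)\ =\ \bigl|\,K(\fA\otimes_\C \C[\Delta^\bullet])\,\bigr|\ \longrightarrow\ \bigl|\,K^\top(C(|\Delta^\bullet|,\fA))\,\bigr|\ \simeq\ K^\top(\fA),
\]
which on $\pi_n$ delivers the desired map $KH_n(\fA)\to K_n^\top(\fA)$. Restricting to $0$-simplices before realization recovers the original comparison $K_n(\fA)\to K_n^\top(\fA)$, so the triangle in the statement commutes, and naturality in $\fA$ is automatic from functoriality of each step. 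The point I would watch most carefully is the identification of the realization with $K^\top(\fA)$: each levelwise equivalence depends on a choice of vertex, and one must check independence of the choice and compatibility with the $0$-simplex restriction. This is a standard consequence of the model-categorical fact just invoked, which I would cite rather than reprove.
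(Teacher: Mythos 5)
Your argument is correct, but it takes a genuinely different route from the paper's. The paper works with the other standard description of $KH$ from \cite{friendly}, namely $KH_n(\fA)=\colim_r K_{-r}(\Omega^{n+r}\fA)$, and builds the map to $K^{\top}_n(\fA)$ by restricting polynomial path--loop extensions to continuous ones on $[0,1]$, using contractibility of the continuous cone $\fA(0,1]^{r+n+1}$ to invert the topological boundary maps; it then verifies the factorization of $K_n\to K_n^{\top}$ separately for $n\le 0$ (where $K_n(\fA)$ is the first term of the colimit system) and for $n\ge 1$ (by routing through Karoubi--Villamayor $K$-theory $KV_n$). You instead use Weibel's simplicial model $KH(\fA)=\bigl|K(\fA\otimes_\C\C[\Delta^\bullet])\bigr|$ and the cosimplicial restriction $\fA\otimes_\C\C[\Delta^p]\to C(|\Delta^p|,\fA)$; the underlying mechanism --- polynomials restrict to continuous functions on a contractible real domain --- is the same, but your version produces a single spectrum-level map handling all $n\in\Z$ uniformly and makes the commuting triangle automatic from the $0$-simplex inclusion, with no case split and no appeal to $KV$. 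The price is that you need the comparison $K\to K^{\top}$ as a strictly (or coherently) natural transformation of nonconnective spectra on $C^*$-algebras, so that it induces a map of simplicial spectra, and you need the homotopy-colimit lemma for simplicial diagrams all of whose structure maps are equivalences (which holds since $N(\Delta^{\op})$ is contractible, so the canonical map from any level to the realization is an equivalence, independent of the chosen vertex). Both inputs are standard, so your proof is sound; the paper's version is more elementary in that it only ever manipulates $K$-groups and boundary maps of explicit extensions.
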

\begin{proof} Here we use the notation for path and loop functors and the description of $KH$ of \cite{friendly}*{Sections 3--5}. Let $r+n\ge 0$. Restriction of polynomials to maps on the unit interval yields a map of extensions
\[
\xymatrix{0\to \Omega^{r+n+1}\fA\ar[d]\ar[r]&P^{r+n+1}\fA\ar[d]\ar[r]&\Omega^{r+n}\fA\ar[d]\to 0\\
          0\to\fA(0,1)^{r+n+1}\ar[r]&\fA(0,1]^{r+n+1}\ar[r]&\fA(0,1)^{r+n}\to 0.}
\]
The boundary map $K_{-r}^{\top}(\fA(0,1)^{r+n})\to K_{-r-1}^{\top}(\fA(0,1)^{r+n+1})$ is an isomorphism since $\fA(0,1]^{r+n+1}$ is contractible. Hence we have a natural map 
\[
KH_n(\fA)=\colim_rK_{-r}(\Omega^{n+r}\fA)\to \colim_rK_{-r}^{\top}(\fA(0,1)^{n+r})=K_n^{\top}(\fA).
\]                
For $n\le 0$, $K_n(\fA)$ is the first term of the inductive system in the colimit definining $KH_n(\fA)$ and the factorization of the lemma is clear. For $n\ge 1$ the map $K_n(\fA)\to KH_n(\fA)$ factors through Karoubi-Villamayor $K$-theory $KV_n(\fA)$ and the comparison maps fit in a commutative diagram
\[
\xymatrix{K_n(\fA)\ar[dr]\ar[r]&KV_{n}(\fA)=KV_1(\Omega^{n-1}\fA)\ar[d]\ar[r]& K_0(\Omega^n\fA)\ar[d]\\
                         &K_n^{\top}(\fA)=K_1^{\top}(\fA(0,1)^{n-1})& KH_n(\fA).\ar[l]}
\]
This finishes the proof. 
 \end{proof}

\begin{rem}\label{rem:factor}
The composite  $K_0(\fA)\to KH_0(\fA)\to K_0^{\top}(\fA)=K_0(\fA)$ is the identity map; thus $KH_0(\fA)\to K_0(\fA)$ is a split surjection.
\end{rem}

\begin{lem}\label{lem:khpi}
Let $\fA$ be a properly infinite $C^*$-algebra. Then $\fA$ is $K$-regular and the natural map $KH_*(\fA)\to K_*^{\top}(\fA)$ is an isomorphism. 
\end{lem}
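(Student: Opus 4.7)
The lemma has two parts: (a) $K$-regularity, $K_n(\fA)\cong KH_n(\fA)$ for all $n$, and (b) the comparison $KH_n(\fA)\to K_n^{\top}(\fA)$ of Lemma \ref{lem:factor} is an isomorphism. My plan is to reduce both statements to the classical case of stable $C^*$-algebras, using proper infiniteness to absorb the stabilization.

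The base input I would invoke is that a stable $C^*$-algebra $\fB\cong\fB\otimes\cK$ is $K$-regular and satisfies $K_n(\fB)\cong K_n^{\top}(\fB)$ for every $n$ (Suslin--Wodzicki combined with Higson's theorem); composed with the factorization of Lemma \ref{lem:factor}, this also yields $KH_n(\fB)\iso K_n^{\top}(\fB)$. Applying this to $\fB=\fA\otimes\cK$, the problem reduces to showing that the upper-left corner inclusion $\iota:\fA\to\fA\otimes\cK$ induces isomorphisms on Quillen $K$-theory (and hence on $KH_*$); the case of $K^{\top}_*$ is automatic from $C^*$-algebraic stability.

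Proper infiniteness enters as follows. Fix isometries $s_1,s_2\in\fA$ with $s_1s_1^*+s_2s_2^*=1$; these give a $*$-isomorphism $\mu:M_2\otimes\fA\iso\fA$, $e_{ij}\otimes a\mapsto s_ias_j^*$, and by iteration, isomorphisms $\mu_n:M_{2^n}\otimes\fA\iso\fA$. The composite of $\mu_n$ with the upper-left corner inclusion $\fA\to M_{2^n}\otimes\fA$ is an inner corner-cut $*$-endomorphism $a\mapsto s_1^na(s_1^*)^n$ of $\fA$, which preserves $K$-theory classes because conjugation by an isometry is a Murray--von Neumann equivalence. Thus each inclusion $\fA\to M_{2^n}\otimes\fA$ is a $K$-isomorphism, and passing to filtered colimits of Quillen $K$-theory yields $K_*(\fA\otimes M_\infty)\cong K_*(\fA)$.

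The main obstacle is bridging from the purely algebraic colimit $\fA\otimes M_\infty$ to its $C^*$-completion $\fA\otimes\cK$, since the density inclusion is not in general a $K$-theoretic equivalence for arbitrary $C^*$-algebras. Here I would use proper infiniteness a second time to internalize the stabilization inside $\fA$ via a countable family of mutually orthogonal isometries in $\fA$ (in effect, embedding a copy of $\cK$ into $\fA$ in a way that yields an explicit one-sided inverse to $\iota$ at the level of $K$-theory), upgrading the colimit isomorphism to $K_*(\fA)\iso K_*(\fA\otimes\cK)$. The $K$-regularity assertion then follows by applying the same argument to the polynomial algebras $\fA[t_1,\dots,t_r]$, which remain properly infinite.
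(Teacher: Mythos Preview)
Your strategy is exactly what the paper delegates to its citations: the paper invokes \cite{wicris}*{Theorem 3.2} for the isomorphism $K_*(\fA)\cong K_*^{\top}(\fA)$ and then \cite{wicris}*{Corollary 2.3} together with the argument of \cite{roshand}*{Theorem 20} for $K$-regularity, and what you have sketched---using orthogonal isometries to produce an inverse to the corner embedding $\iota:\fA\to\fA\sotimes\cK$ on algebraic $K$-theory, and then transporting the stable results back---is precisely the content of those references. Two points deserve tightening. First, the reason the endomorphism $a\mapsto sas^*$ acts as the identity on all of $K_*$, not just $K_0$, is not ``Murray--von Neumann equivalence'' as such but the fact that in $M_2(\fA)$ the embeddings $a\mapsto\diag(a,0)$ and $a\mapsto\diag(sas^*,0)$ differ by conjugation by the invertible matrix $\left(\begin{smallmatrix}s&1-ss^*\\0&s^*\end{smallmatrix}\right)$; this inner-automorphism argument is what carries over to higher $K_n$ and to the polynomial extensions $\fA[t_1,\dots,t_r]$. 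Second, your bridging step can be made precise: with $(t_i)_{i\ge 1}$ orthogonal isometries in $\fA$, the assignment $a\otimes e_{ij}\mapsto t_iat_j^*$ extends to a $*$-homomorphism $\phi:\fA\sotimes\cK\to\fA$, and both composites $\phi\circ\iota$ and $\iota\circ\phi$ are conjugation by an isometry (the latter by $W=\sum_i t_i\otimes e_{1i}$ in the multiplier algebra), hence act trivially on $K_*$ by the same $M_2$-trick applied inside the overring. Finally, a small misattribution: $K$-regularity of stable $C^*$-algebras is due to Rosenberg \cite{rosop}*{Theorem 1.5}; Suslin--Wodzicki and Higson/Karoubi supply the comparison with $K^{\top}_*$, not regularity.
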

\begin{proof}
We know from \cite{wicris}*{Theorem 3.2} that $K_*(\fA)\to K_*^{\top}(\fA)$ is an isomorphism, so by Lemma \ref{lem:factor} it suffices to show that $\fA$ is $K$-regular. This follows from \cite{wicris}*{Corollary 2.3} and the argument of \cite{roshand}*{Theorem 20}.
\end{proof}

In what follows we write $\fA\sotimes\fB$ for the \emph{minimal tensor product} of $C^*$-algebras $\fA$ and $\fB$. 
Let  $kk$ and $KK$ be the triangulated bivariant $K$-theory categories of $\C$-algebras and of $C^*$-algebras, and let $j:\aha\to kk$ (\cite{ct}) and $k:C^*-\alg\to KK$ \cite{ralf} be the canonical functors.

\begin{prop}\label{prop:comp}
Let $E$ be a regular finite graph. 
Then there is a surjective natural transformation of functors $C^*-\alg\to\ab$,
\[
\comp_{\fA}:kk(L(E),\fA)\onto KK(C^*(E),\fA),
\]
making the following diagram commute.
\[
\xymatrix{\hom_{\ahas}(L(E),\fA)\ar[r]^(.45){\sim}\ar[d]&\hom_{C^*-\alg}(C^*(E),\fA)\ar[d]\\
kk(L(E),\fA)\ar@{>>}[r]^{\comp_{\fA}}&KK(C^*(E),\fA)}
\]
If moreover $\fA$ is $K_0$-regular and $KV_1(\fA)\to K_1^{\top}(\fA)$ is injective, then $\comp_{\fA}$ is an isomorphism.
\end{prop}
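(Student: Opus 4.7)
The plan is to compute both sides from a common distinguished triangle. For a finite regular graph $E$ with adjacency matrix $A_E$, a Cuntz--Krieger-style resolution places $L(E)$ in a distinguished triangle in $kk$,
\[
\ell^{E^0}\xrightarrow{I-A_E^t}\ell^{E^0}\to L(E)\to\ell^{E^0}[1],
\]
whose completion is the analogous triangle in $KK$ representing $C^*(E)$:
\[
\C^{E^0}\xrightarrow{I-A_E^t}\C^{E^0}\to C^*(E)\to\C^{E^0}[1].
\]
Applying $kk(-,\fA)$ and $KK(-,\fA)$ and using the identifications $kk_n(\ell,\fA)=KH_n(\fA)$ and $KK_n(\C,\fA)=K_n^{\top}(\fA)$ produces long exact sequences in $KH_*(\fA)^{E^0}$ and $K_*^{\top}(\fA)^{E^0}$ respectively, whose horizontal maps are induced by the same integer matrix $I-A_E^t$.

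The next step is to define $\comp_\fA$ by promoting the natural transformation $KH_*\to K_*^{\top}$ of Lemma~\ref{lem:factor} to a morphism of fibre sequences of $K$-theory spectra; since $(I-A_E^t)^*$ is induced by an integer matrix, it commutes with this comparison, and taking $\pi_0$ of the resulting map of cofibres delivers $\comp_\fA$ as the middle-term morphism in a ladder between the two long exact sequences. Naturality in $\fA$ is then automatic. For the commutative square in the statement, a $*$-homomorphism $\phi:L(E)\to\fA$ has class in $kk(L(E),\fA)$ whose restriction along $\ell^{E^0}\to L(E)$ records the tuple of vertex idempotent classes $([\phi(p_v)])_{v\in E^0}\in KH_0(\fA)^{E^0}$; under $\comp_\fA$ this becomes $([\hat\phi(p_v)])_{v\in E^0}\in K^{\top}_0(\fA)^{E^0}$, matching the restriction of the $KK$-class of $\hat\phi$ to $\C^{E^0}$. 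Combined with matched connecting information from the two triangles, this forces commutativity.

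For surjectivity, the key input is that $KH_n(\fA)\to K_n^{\top}(\fA)$ is surjective for both $n=0$ (Remark~\ref{rem:factor}) and $n=1$ (since $K_1(\fA)\to K_1^{\top}(\fA)$ is surjective on any $C^*$-algebra---every unitary gives an invertible---and this map factors through $KH_1(\fA)$); the five lemma applied to the morphism of long exact sequences then gives $\comp_\fA$ surjective. For the isomorphism under the extra hypotheses, $K_0$-regularity upgrades $KH_0(\fA)\to K_0^{\top}(\fA)=K_0(\fA)$ to an iso, settling the ``$\ker$ half''; the injectivity of $KV_1(\fA)\to K_1^{\top}(\fA)$, together with the factorization $KV_1(\fA)\to K_0(\Omega\fA)\to KH_1(\fA)\to K_1^{\top}(\fA)$ appearing in the proof of Lemma~\ref{lem:factor}, is then used to show that the induced map on $(I-A_E^t)^*$-cokernels from $KH_1(\fA)^{E^0}$ to $K_1^{\top}(\fA)^{E^0}$ is iso, completing the five lemma. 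The main obstacle is this final cokernel comparison: the kernel of $KH_1\to K_1^{\top}$ need not be trivial in general, and one must exploit both the $KV_1$-level injectivity and the specific structure of the matrix $I-A_E^t$ to control it.
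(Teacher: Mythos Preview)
Your strategy of comparing the two long exact sequences arising from the Cohn/Toeplitz resolutions is the same one the paper uses to prove surjectivity, but you and the paper diverge on the \emph{definition} of $\comp_{\fA}$, and this is where your argument has a real gap. The paper defines $\comp_{\fA}$ explicitly via Poincar\'e duality,
\[
kk(L(E),\fA)\cong KH_1(\fA\otimes_{\C} L(E_t))\to KH_1(\fA\sotimes C^*(E_t))\to K_1^{\top}(\fA\sotimes C^*(E_t))\cong KK(C^*(E),\fA),
\]
and then verifies the commutative square by computing that the Poincar\'e dual of $j(\phi)$ is the class of the concrete unitary $1-\sum_v\phi(v)\otimes v+\sum_e\phi(e)\otimes e_t^*$, which visibly coincides with the $C^*$-Poincar\'e dual of $k(\hat\phi)$. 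Your approach instead \emph{defines} $\comp_{\fA}$ as the map on fibres induced by $KH\to K^{\top}$, using the identifications of those fibres with $kk(L(E),\fA)$ and $KK(C^*(E),\fA)$ coming from the two triangles. This does produce a well-defined natural transformation sitting in the ladder of exact sequences, but you have not shown it sends $j(\phi)$ to $k(\hat\phi)$: matching the images in $KH_0(\fA)^{E^0}$ only pins down $\comp_{\fA}(j(\phi))-k(\hat\phi)$ modulo $\BF(E)^{\vee}\otimes K_1^{\top}(\fA)$, and ``matched connecting information'' is an assertion, not an argument. By naturality in $\fA$ the question reduces to the single check $\comp_{C^*(E)}(j(\iota))=1_{C^*(E)}$ for the inclusion $\iota\colon L(E)\hookrightarrow C^*(E)$, but that still requires tracing through both triangle identifications---essentially the explicit computation the paper carries out via Poincar\'e duality.

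There is a second gap in your isomorphism argument. You treat $\ker(KH_1(\fA)\to K_1^{\top}(\fA))$ as a genuine obstruction to be controlled via ``the specific structure of the matrix $I-A_E^t$''. No matrix-specific argument is needed: $K_0$-regularity of $\fA$ already forces $KH_1(\fA)=KV_1(\fA)$, and since $KV_1(\fA)\to K_1^{\top}(\fA)$ is always surjective, the injectivity hypothesis makes it---and hence $KH_1(\fA)\to K_1^{\top}(\fA)$---an isomorphism outright. With both $KH_i\to K_i^{\top}$ ($i=0,1$) isomorphisms, the outer columns of the ladder are isomorphisms and the middle follows; the obstacle you describe does not arise under the stated hypotheses.
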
 
\begin{proof}
Let $E'$ be the essential graph obtained from $E$ by source removal \cite{lpabook}*{Definition 6.3.26}; there is a full projection $p\in L(E)$ such that $L(E')$ and $C^*(E')$ are $*$-isomorphic to $pL(E)p$ and $pC^*(E)p$. Hence we may assume that $E=E'$, by Morita invariance. Let $E_t$ be the transpose graph. The map $\comp_{\fA}$ is defined as the following composite, where the individual maps are explained below. 
\begin{multline*}
kk(L(E),\fA)\cong KH_1(\fA\otimes_\C L(E_t))\to
 KH_1(\fA\sotimes C^*(E_t))\to\\ K_1^{\top}(\fA\sotimes C^*(E_t))\cong KK(C^*(E),\fA).
\end{multline*}
The isomorphisms above come from Poincar\'e duality, proved for $KK$ and graph $C^*$-algebras in \cite{kamiput} and for $kk$ and Leavitt path algebras in \cite{classinvo}. The second map in the sequence is induced by the completion map $L(E_t)\to C^*(E_t)$ and the third is the comparison map from homotopy algebraic to topological $K$-theory, as in Lemma \ref{lem:factor}. Write $e_t\in E^1_t$ for the edge corresponding to an edge $e\in E^1$. By the proof of \cite{classinvo}*{Theorem 11.2}, the algebraic Poincar\'e dual of a $*$-homomorphism $\phi:L(E)\to\fA$ in $KH_1(L(E_t)\otimes\fA)$ is the class of the unitary $$1\otimes 1-\sum_{v\in E^0}\phi(v)\otimes v+\sum_{e\in E^1}\phi(e)\otimes e_t^*,$$ which maps to the class of the same unitary in $K_1^{\top}(\fA\sotimes C^*(E_t))$. This shows that the diagram of the proposition commutes, since the class just mentioned is also the image under $C^*$-Poincar\'e duality of the $*$-homomorphism $\hat{\phi}:C^*(E)\to \fA$ of \cite{kamiput}*{Section 4}. Next observe that passage to completion induces a map of extensions from the presentation of $L(E)$ as a quotient of the Cohn algebra \cite{lpabook}*{Definition 1.5.1 and Proposition 1.5.5} to that of $C^*(E)$ as a quotient of the Toeplitz algebra; this map in turn induces a map of exact sequences of abelian groups
\[
\xymatrix{
0\to \BF(E)^{\vee}\otimes KH_1(\fA)\ar@{->>}[d]\ar[r]&kk(L(E),\fA)\ar[d]^{\comp_{\fA}}\ar[r]&
\hom(\BF(E),KH_0(\fA))\ar@{->>}[d]\to 0\\
0\to \BF(E)^{\vee}\otimes K^{\top}_1(\fA)\ar[r]& KK(C^*(E),\fA)\ar[r]&\hom(\BF(E),K_0(\fA))\to 0}
\]
Here $\BF(E)^{\vee}$ is the dual Bowen-Franks group of \cite{classinvo}*{Section 12}, whose definition is recalled in \eqref{BF} below. The vertical map on the right is surjective by Remark \ref{rem:factor}; that of the left is surjective because $K_1(\fA)\to K_1^{\top}(\fA)$ is. Diagram chasing shows that $\comp_{\fA}$ is surjective and that it is an isomorphism whenever the map $KH_i(\fA)\to K_i^{\top}(\fA)$ is an isomorphism for $i=0,1$. If $\fA$ is $K_0$-regular then $KH_1(\fA)=KV_1(\fA)$, and the last assertion of the proposition follows, since $KV_1(\fA)\to K_1^{\top}(\fA)$ is always surjective.
\end{proof}

\begin{ex}\label{ex:comp} Jonathan Rosenberg has conjectured \cite{rosop}*{Conjecture 2.1} that any $C^*$-algebra is $K_0$-regular. It was shown in \cite{acta}*{Theorem 8.1} that commutative $C^*$-algebras are $K$-regular. If $\fA$ is a stable $C^*$-algebra, then $\fA$ is $K$-regular by \cite{rosop}*{Theorem 1.5} and the comparison map $K_*(\fA)\to K_*^{\top}(\fA)$ is an isomorphism by \cite{sw}*{Theorem 10.9} and \cite{kardisc}*{Th\'eor\`eme 4.9}. By Lemma \ref{lem:khpi} the same is true if $\fA$ is properly infinite.  Hence the map $\comp_{\fA}$ of Proposition \ref{prop:comp} is an isomorphism if $\fA$ is either stable or properly infinite. 
\end{ex}

\section{A functor from \topdf{$\langle \leav\rangle_{kk}$}{kk(leavitt)} to \topdf{$KK$}{KK}}\label{sec:functor}

A \emph{$*$-algebra} is a $\C$-algebra $A$ equipped with a semilinear involution $*:A\to A^{\op}$; a $*$-homomorphism of $*$-algebras is an involution preserving algebra homomorphism. Write $\ahas$ for the category of $*$-algebras and $*$-homomorphisms and 
\[
\leav^*\subset\ahas \text{ and } \langle\leav\rangle_{kk}\subset kk
\]
for the full subcategories on the Leavitt path algebras of finite regular graphs. If $E$ is a finite regular graph, we write $A_E$ for its incidence matrix and 
\begin{equation}\label{BF}
\BF(E)=\coker(I-A_E^t)\text{ and } \BF(E)^{\vee}=\coker(I-A_E)
\end{equation}
for its \emph{Bowen-Franks} and \emph{dual Bowen-Franks} groups.
\begin{thm}\label{thm:onto}
Let $\widehat{\ \ }:\leav^*\to C^*-\alg$ be the completion functor. Then there is a $\Z$-linear functor $\compu:\langle\leav\rangle_{kk}\to KK$ with the following properties.
\item[i)] The following diagram commutes
\[
\xymatrix{\leav^*\ar[d]^j\ar[r]^{\widehat{\ \ }}& C^*-\alg\ar[d]^k\\
          \langle\leav\rangle_{kk}\ar[r]_(.55){\compu}& KK}
\]
 \item[ii)] We have an exact sequence
\[
0\to \BF(E)^\vee\otimes\BF(F)\otimes \C^*\to kk(L(E),L(F))\overset{\compu}\lra KK(C^*(E),C^*(F))\to 0. 
\]
In particular, $\compu$ is a full functor.
\item[iii)] $\compu$ is a conservative functor. 
\end{thm}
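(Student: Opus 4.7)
I would define $\compu$ on objects by $\compu(L(E))=C^*(E)$, and on morphisms, for $\alpha\in kk(L(E),L(F))$, by
$$\compu(\alpha):=\comp_{C^*(F)}\bigl((\hat{\iota}_F)_{*}\alpha\bigr),$$
where $\hat{\iota}_F\colon L(F)\to C^*(F)$ is the canonical completion $*$-homomorphism and $\comp$ is the natural transformation from Proposition~\ref{prop:comp}. Commutativity of the diagram in~(i) is then the commutative square of Proposition~\ref{prop:comp} specialised to $*$-homomorphisms, and $\Z$-linearity is clear from the construction. For functoriality, the cleanest route is via Poincar\'e duality: under the identifications
$$kk(L(E),L(F))\cong KH_1(L(F)\otimes_{\C}L(E_t)),\qquad KK(C^*(E),C^*(F))\cong K^{\top}_1(C^*(F)\sotimes C^*(E_t)),$$
$\compu$ becomes the composite of completion and the comparison map $KH\to K^{\top}$ of Lemma~\ref{lem:factor}. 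Composition in $kk$ (resp.\ $KK$) corresponds, under these dualities, to the product pairing tracing out the common middle tensor factor; since the comparison $KH\to K^{\top}$ is multiplicative with respect to such products, functoriality of $\compu$ follows.

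For~(ii), I would compare the two exact sequences of Proposition~\ref{prop:comp}, specialised to $\fA=L(F)$ and $\fA=C^*(F)$, in the commutative ladder with exact rows
$$
\xymatrix{
0\ar[r]&\BF(E)^{\vee}\otimes KH_1(L(F))\ar[r]\ar[d]&kk(L(E),L(F))\ar[d]^{\compu}\ar[r]&\hom(\BF(E),KH_0(L(F)))\ar[d]\ar[r]&0\\
0\ar[r]&\BF(E)^{\vee}\otimes K^{\top}_1(C^*(F))\ar[r]&KK(C^*(E),C^*(F))\ar[r]&\hom(\BF(E),K_0(C^*(F)))\ar[r]&0.
}
$$
Since $KH_0(L(F))=K_0(L(F))=K_0(C^*(F))=\BF(F)$, the right-hand vertical arrow is an isomorphism. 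Using the algebraic and topological Pimsner--Voiculescu six-term sequences computing $KH_1(L(F))$ and $K^{\top}_1(C^*(F))$ from $I-A_F^t$ acting on $KH_*(\C)^{F^0}$ and $K^{\top}_*(\C)^{F^0}$ respectively, and noting $K^{\top}_1(\C)=0$ while $KH_1(\C)=K_1(\C)=\C^*$, I would identify the kernel of the comparison $KH_1(L(F))\to K^{\top}_1(C^*(F))$ with $\coker(I-A_F^t\colon(\C^*)^{F^0}\to(\C^*)^{F^0})=\BF(F)\otimes\C^*$; the divisibility of $\C^*$ ensures that the resulting $\Tor_1^{\Z}$ with $\BF(E)^{\vee}$ vanishes, so tensoring preserves this exactness. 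A snake-lemma chase then produces the short exact sequence of~(ii), yielding in particular the fullness of $\compu$.

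For~(iii), my plan is to combine two applications of the five-lemma with a Yoneda argument inside the full subcategory $\langle\leav\rangle_{kk}\subset kk$. Assume $\compu(\alpha)$ is invertible in $KK$; then $K_*^{\top}(\compu(\alpha))$ is an isomorphism for $*=0,1$. The canonical identifications $KH_0(L(E))\cong K_0^{\top}(C^*(E))=\BF(E)$, which are natural in $\alpha$, give that $KH_0(\alpha)$ is an isomorphism; applying the five-lemma to the natural short exact sequence
$$0\to\BF(E)\otimes\C^*\to KH_1(L(E))\to K_1^{\top}(C^*(E))\to 0$$
that arises from the ladder of Pimsner--Voiculescu sequences then shows that $KH_1(\alpha)$ is also an isomorphism. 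Varying $L(G)\in\langle\leav\rangle_{kk}$ and applying the five-lemma to the exact sequences of Proposition~\ref{prop:comp} (with $\fA=L(E)$ and $\fA=L(F)$), we conclude that $\alpha_{*}\colon kk(L(G),L(E))\to kk(L(G),L(F))$ is an isomorphism for every such $G$. Taking $G=F$ produces a right inverse $\beta$ to $\alpha$; since injectivity on $kk(L(E),-)$ forces this $\beta$ to also be a left inverse, $\alpha$ is an isomorphism in $kk$.

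The main obstacle I expect is the identification in~(ii) of $\ker(KH_1(L(F))\to K_1^{\top}(C^*(F)))=\BF(F)\otimes\C^*$ and its compatibility with the $\BF(E)^{\vee}$-tensorisation, since $\BF(E)^{\vee}$ need not be flat and the divisibility of $\C^*$ has to be carefully leveraged to control the $\Tor$-terms. A secondary technical point is the functoriality claim in the construction, which hinges on identifying composition in $kk$ and $KK$ with the Poincar\'e-dual product pairings and on the multiplicativity of the comparison $KH\to K^{\top}$ with respect to them.
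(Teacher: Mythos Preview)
Your construction of $\compu$ and your arguments for parts~(i) and~(ii) match the paper's essentially line for line: the definition via $\comp_{C^*(F)}\circ(\hat{\iota}_F)_*$, the functoriality check through Poincar\'e duality and multiplicativity of the comparison map, the ladder of UCT sequences, and the snake-lemma computation of the kernel are all as in the paper. (One small point: the exact sequence you invoke for $kk(L(G),L(E))$ is not literally Proposition~\ref{prop:comp}, which is stated for $C^*$-algebra targets, but the analogous algebraic sequence from the Cohn triangle; this is harmless.)

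For part~(iii) you take a genuinely different route. The paper proves conservativity by showing that the kernel $\fJ$ of the ring map $\compu\colon kk(L(E),L(E))\to KK(C^*(E),C^*(E))$ satisfies $\fJ^2=0$: every element of $\fJ$ is a sum of composites $\eta\circ\xi$ with $\xi\in kk_{-1}(L(E),\C)$ and $\eta$ in the image of $K_1(C(E))\to K_1(L(E))$, and such composites vanish because $\xi\circ\pi$ lies in $kk_{-1}(C(E),\C)=0$. Fullness plus nilpotence of the kernel then gives conservativity immediately. Your approach instead shows $KH_0(\alpha)$ and $KH_1(\alpha)$ are isomorphisms and concludes by Yoneda and the five-lemma. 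This is correct in outline, but the step ``applying the five-lemma to the natural short exact sequence $0\to\BF(E)\otimes\C^*\to KH_1(L(E))\to K_1^{\top}(C^*(E))\to 0$ that arises from the ladder of Pimsner--Voiculescu sequences'' needs more care: the PV sequences are attached to the specific graphs $E,F$ and are not a~priori natural in an arbitrary $kk$-class $\alpha$, so they do not by themselves yield the required commuting square between $KH_1(\alpha)$ and $K_1^{\top}(\compu(\alpha))$. That square can, however, be extracted from what you have already proved: apply functoriality of $\compu$ and the UCT ladders with first variable $L(G)$ for $G=\mathcal R_1$ (one vertex, one loop, so $\BF(G)^\vee=\Z$), and chase the resulting cube using injectivity of the left-hand inclusions in the UCT sequences. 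Once this is supplied, your argument goes through. The paper's nilpotent-kernel argument is shorter and avoids this detour; your approach, on the other hand, makes explicit that $KH_*(\alpha)$ is already an isomorphism, which is a slightly stronger intermediate conclusion.
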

\begin{proof}
In order that the diagram in i) commutes, we must set $\compu(L(E))=C^*(E)$; this defines $\compu$ on objects. To define it also on homomorphisms, let $\iota:kk(L(E),L(F))\to kk(L(E),C^*(F))$ be the map induced by the inclusion, let $\comp$ be as in Proposition \ref{prop:comp} and set
\[
\compu=\comp_{C^*(F)}\circ\iota:kk(L(E),L(F))\to KK(C^*(E),C^*(F)).
\]
It follows from Proposition \ref{prop:comp} that the diagram of part i) commutes. In particular $\compu$ preserves identity maps. Next we have to check that if $E$, $F$ and $G$ are finite regular graphs, $\xi\in kk(L(F),L(G))$ and $\eta\in kk(L(E), L(F))$, then 
\begin{equation}\label{eq:compufun}
\compu(\xi\circ\eta)=\compu(\xi)\circ\compu(\eta). 
\end{equation} 
First consider the case when $E$ and $F$ are essential graphs. Recall from \cite{actenso}*{Lemma 6.1}, that Leavitt path algebras of finite graphs are regular supercoherent, so the tensor product of two such algebras is $K$-regular by \cite{abc}*{Theorems 7.6 and 8.6}. Let $\omega(\eta)\in K_1(L(F)\otimes L(E_t))=KH_1(L(F)\otimes L(E_t))$ and $\omega(\xi)\in K_1(L(G)\otimes L(F_t))$ be the elements associated to $\eta$ and $\xi$ under the Poincaré duality isomorphism described in \cite{classinvo}*{Theorem 11.2}. Let $u_E=1-\sum_{v\in E^0}v\otimes v+\sum_{e\in E^1}e\otimes e_t^*\in K_1(L(E)\otimes L(E_t))$ and let $\rho_E:L(E_t)\otimes L(E)\to\Sigma_X$ be as in the proof of \cite{classinvo}*{Theorem 11.2}. One checks, using the explict formulas of \cite{classinvo}*{Theorem 11.2} for the Poincar\'e duality isomorphisms (which are in turn those of \cite{kamiput}) and the identification $K_n(\Sigma_XA)=K_{n-1}(A)$, that for the cup-product $\star$ of \cite{cv}*{Lemmas 4.5 and 8.3}, we have
\[
\omega(\xi\circ\eta)=(LG\otimes\rho_F\otimes L(F_t))(\omega(\xi)\star(LF\otimes\rho_E\otimes L(E_t))(\omega(\eta)\star u_E)).
\]
Since passage to the completion followed by the comparison map $K_*\to K^{\top}_*$ preserves cup-products, and since the formulas for the Poincar\'e duality isomorphism in the $C^*$-algebra setting are the same as in the algebraic one, we get the identity \eqref{eq:compufun}, under our current assumption that $E$ and $F$ are essential. Next we consider the general case. Let $E'$ be as in the proof of Proposition \ref{prop:comp} and let $\inc_{E}:L(E')\to L(E)$ and $\widehat{\inc}_E:C^*(E')\to C^*(E)$ be the full corner inclusions. It is straightforward from the definition of $\compu$ that 
\[
\compu(\eta)=\compu(\eta\circ\inc_{E})\circ(\widehat{\inc}_E)^{-1}.
\]
Hence using naturality and what we have just proved, we obtain
\begin{align*}
\compu(\xi)\circ\compu(\eta)&=\compu(\xi\circ\inc_{F})\circ(\widehat{\inc}_F)^{-1}\circ \compu(\eta\circ\inc_{E})\circ(\widehat{\inc}_E)^{-1}\\
&=\compu(\xi\circ\inc_{F})\circ\compu((\inc_F)^{-1}\circ\eta\circ\inc_E)\circ(\widehat{\inc}_E)^{-1}\\
&=\compu(\xi\circ\eta\circ\inc_E)\circ(\widehat{\inc}_E)^{-1}\\
&=\compu(\xi\circ\eta).
\end{align*}
Thus $\compu$ is a functor; this finishes the proof of part i). To prove part ii), observe that, by construction, we have a commutative diagram with exact rows
\[
\xymatrix{0\to \BF(E)^{\vee}\otimes K_1(L(F))\ar[r]\ar[d]& kk(L(E),L(F))\ar[d]^{\compu}\ar[r]& \hom(\BF(E),\BF(F))\ar@{=}[d]\to 0\\
0\to \BF(E)\otimes \ker(I-A_F^t)\ar[r]&KK(C^*(E),C^*(F))\ar[r]& \hom(\BF(E),\BF(F))\to 0.}
\]
The vertical map on the left is induced by the surjection in the following exact sequence
\[
0\to\BF(F)\otimes\C^*\to K_1(L(F))\to\ker(I-A_F^t)\to 0.
\]
This sequence splits because $\ker(I-A_F^t)$ is a free abelian group. Hence by the snake lemma, $\compu$ is surjective and  $\ker(\compu)=\BF(E)^\vee\otimes\BF(F)\otimes\C^*$, finishing the proof of part ii). It follows that for finite regular $E$, we have a surjective ring homomorphism
\begin{equation}\label{map:ontocompu}
\compu:kk(L(E),L(E))\onto KK(L(E),L(E)).
\end{equation}
To prove that $\compu$ is conservative it suffices to show that the kernel $\fJ$ of \eqref{map:ontocompu} is a nilpotent ideal. We will in fact show that $\fJ^2=0$. Let $C(E)$ be the Cohn algebra and $\pi:C(E)\to L(E)$ the projection. Then  
\begin{multline}\label{eq:compotriv}
\BF(E)\otimes\C^*=\ker(K_1(L(E))\to \ker(I-A_E^t))=\im(K_1(C(E))\overset{\pi}\lra K_1(L(E)))\\
\subset kk_1(\C,L(E)).
\end{multline}
Besides, by \cite{cm1}*{Formula (6.4)}, $\BF(E)^\vee=kk_{-1}(L(E),\C)$, and by \cite{cm1}*{Lemma 7.21}, 
$\fI$ is generated as an abelian group by the composites $\eta\circ\xi$ with 
\goodbreak
\noindent $\xi\in kk_{-1}(L(E),\C)$ and $\eta$ in \eqref{eq:compotriv}. Hence it suffices to show that $\xi\circ\eta=0$ for any such two elements. By definition, if $\eta$ is in \eqref{eq:compotriv}, then there is $\eta'\in kk_1(\C,C(E))$ such that $\eta=\pi\circ\eta'$. But by \cite{cm1}*{Theorem 4.2}, $C(E)$ is $kk$-isomorphic to $\C^{(E^0)}$, and so $\xi\circ\pi\in kk_{-1}(C(E),\C)=K_{-1}(\C)^{E^0}=0$. Thus $\xi\circ\eta=0$; this finishes the proof of the theorem.
\end{proof}

\begin{coro}\label{coro:onto}
If either $\BF(E)$ or $\BF(F)$ is finite, then $\compu:kk(L(E),L(F))\to KK(C^*(E),C^*(F))$ is an isomorphism.
\end{coro}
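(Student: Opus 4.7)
The plan is to invoke part (ii) of Theorem \ref{thm:onto}, which already provides the exact sequence
\[
0\to \BF(E)^\vee\otimes\BF(F)\otimes \C^*\to kk(L(E),L(F))\overset{\compu}\lra KK(C^*(E),C^*(F))\to 0.
\]
Surjectivity of $\compu$ is built into this sequence, so proving the corollary reduces to showing that the kernel $\BF(E)^\vee\otimes\BF(F)\otimes \C^*$ vanishes under the hypothesis. The entire corollary is then a formal consequence of the theorem together with one elementary fact about abelian groups; accordingly I do not expect any genuine obstacle.

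The elementary input I would use is that $G\otimes_\Z D=0$ whenever $G$ is a torsion abelian group and $D$ is a divisible abelian group: if $ng=0$ in $G$ and $d=nd'$ in $D$, then $g\otimes d=g\otimes nd'=ng\otimes d'=0$. The multiplicative group $\C^*$ is divisible, so $G\otimes\C^*=0$ for every torsion $G$.

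To finish I would split into two cases. If $\BF(F)$ is finite then it is torsion, whence $\BF(F)\otimes\C^*=0$ and the whole kernel vanishes. If instead $\BF(E)$ is finite, I would use that $\BF(E)=\coker(I-A_E^t)$ and $\BF(E)^\vee=\coker(I-A_E)$ are cokernels of square integer matrices whose determinants agree, since $\det(I-A_E)=\det(I-A_E^t)$. Hence $\BF(E)$ is finite precisely when $\BF(E)^\vee$ is, in which case both have cardinality $|\det(I-A_E)|$; in particular $\BF(E)^\vee$ is torsion, so $\BF(E)^\vee\otimes\C^*=0$ and again the kernel vanishes. In either case $\compu$ is injective, and together with the surjectivity coming from Theorem \ref{thm:onto}(ii) this shows it is an isomorphism.
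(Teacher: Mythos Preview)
Your proof is correct and follows essentially the same route as the paper: invoke the exact sequence of Theorem~\ref{thm:onto}(ii), use that $\C^*$ is divisible so that a finite (torsion) factor kills the tensor product, and reduce the case ``$\BF(E)$ finite'' to ``$\BF(E)^\vee$ finite''. The only cosmetic difference is that the paper phrases the last reduction as ``$\BF(E)$ and $\BF(E)^\vee$ are finitely generated of the same rank'' rather than via equality of determinants, but this is the same observation.
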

\begin{proof} Observe that $\BF(E)$ and $\BF(E)^{\vee}$ are finitely generated and have the same rank, so one is finite if and only if the other is. Now use part ii) of Theorem \ref{thm:onto} and the fact that $\C^*$ is a divisible group. 
\end{proof}

\section{Properly infinite and simple purely infinite algebras}\label{sec:spi}

In this section we consider algebraic (i.e. polynomial) homotopy between algebra homomorphisms and continuous, involution preserving homotopy between $C^*$-algebra homomorphisms; we write $\sim$ for the former and $\approx$ for the latter. Let $\iota_2:A\to M_2A$ be the upper left hand corner inclusion of an algebra into the $2\times 2$ matrices with entries in $A$. We say that two algebra homomorphisms $f,g:A\to B$ are \emph{$M_2$-homotopic}, and write $f\sim_{M_2} g$, if $\iota_2\circ f\sim \iota_2\circ g$. We put $[A,B]=\hom_{\alg_\C}(A,B)/\sim$ and $[A,B]_{M_2}=\hom_{\alg_\C}(A,B)/\sim_{M_2}$. The relation $\approx_{M_2}$ and the sets $[[\fA,\fB]]$ and $[[\fA,\fB]]_{M_2}$ of ($M_2$-) $C^*$-homotopy classes of $C^*$-algebra homomorphisms $\fA\to\fB$ are defined similarly.

Recall from \cite{black}*{Section 6.11} that an idempotent $p$ in a unital algebra $A$ is \emph{very full} if there are elements $x\in pA$ and $y\in Ap$ such that $yx=1$. A homomorphism of unital algebras $\phi:A\to B$ is \emph{very full} if $\phi(1)$ is a very full idempotent. We write $[L(E),A]_{M_2}^{\vf}$ for the set of $M_2$-homotopy classes of very full homomorphisms $L(E)\to A$. 

\begin{ex}\label{ex:vfull}
Any nonzero idempotent of a simple purely infinite (spi) unital ring is very full, and any idempotent $M_2$-homotopic to zero is itself zero. Hence if $E$ is a finite graph and $A$ an spi algebra, we have
\begin{equation}\label{eq:nozero1}
[L(E),A]^{\vf}_{M_2}=[L(E),A]_{M_2}\setminus\{0\}.
\end{equation}
If $E$ is spi and $A$ is any unital algebra containing elements $x_1,x_2,y_1,y_2$ such that $y_ix_j=\delta_{i,j}$, then by \cite{classinvo}*{Example 13.19}, the set $[L(E),A]^{\vf}_{M_2}$ is naturally a group, and there is a group isomorphism
\begin{equation}\label{map:homoleav}
[L(E),A]^{\vf}_{M_2}\cong kk(L(E),A).
\end{equation}
\end{ex}

Recall that a $*$-homomorphism of $C^*$-algebras $\phi:\fA\to\fB$ with $\fA$ unital is called \emph{full} 
if $\phi(1)$ is a full projection in $\fB$. We write $[[\fA,\fB]]^f\subset [[\fA,\fB]]$ and $[[\fA,\fB]]_{M_2}\subset [[\fA,\fB]]_{M_2}^f$ for the subsets of homotopy classes of full homomorphisms.  

\begin{ex}\label{ex:full*}
If $\fA$ and $\fB$ are separable and unital, and $\fA$ is also simple and nuclear, then 
by \cite{chris}*{Proposition 3.1.2 and Theorem 4.1.1}, and Lemma \ref{lem:m2htpy}, $[[\fA,\cO_\infty\sotimes\fB]]_{M_2}^f$ is a group under direct sum, and there is a group isomorphism
\begin{equation}\label{map:homopis}
[[\fA,\cO_\infty\sotimes\fB]]_{M_2}^f\cong KK(\fA,\fB).
\end{equation}
If in addition $\fB$ is simple, then the same is true of $\cO_\infty\sotimes\fB$, and thus
\begin{equation}\label{eq:nozero2}
[[\fA,\cO_\infty\sotimes\fB]]_{M_2}^f=[[\fA,\cO_\infty\sotimes\fB]]_{M_2}\setminus\{0\}.
\end{equation}
If furthermore $\fB$ is purely infinite, then $\cO_\infty\sotimes\fB\cong\fB$ by Kirchberg's theorem \cite{chris}*{Theorem 2.1.5} and any nonzero projection in $\fB$ is very full.
\end{ex}

\begin{prop}\label{prop:comp1}
Let $E$ be an spi graph and let $\fA$ be a unital separable $C^*$-algebra. Then there is a natural isomorphism
\begin{equation}\label{map:comp2}
[L(E),\cO_\infty\sotimes\fA]^{\vf}_{M_2}\iso [[C^*(E),\cO_\infty\sotimes\fA]]_{M_2}^f
\end{equation}
\end{prop}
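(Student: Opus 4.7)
The plan is to realize the stated bijection as a composition of three natural isomorphisms. Set $\fB := \cO_\infty \sotimes \fA$; this is unital and separable, and since it contains the unital properly infinite subalgebra $\cO_\infty \otimes 1$ it is itself properly infinite. I will assemble the chain
\[
[L(E),\fB]^{\vf}_{M_2} \overset{(1)}{\cong} kk(L(E),\fB) \overset{(2)}{\cong} KK(C^*(E),\fB) \overset{(3)}{\cong} [[C^*(E),\fB]]^f_{M_2}.
\]

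For (1), two Cuntz isometries $s_1,s_2 \in \cO_\infty$ produce elements $x_i := s_i\otimes 1$ and $y_i := s_i^*\otimes 1$ in $\fB$ with $y_i x_j = \delta_{i,j}$. Since $E$ is spi, Example \ref{ex:vfull} applies and the group isomorphism \eqref{map:homoleav} gives the first identification. For (2), since $\fB$ is properly infinite, Example \ref{ex:comp} asserts that the natural map $\comp_{\fB}$ of Proposition \ref{prop:comp} is an isomorphism, and by the commutative diagram there it carries $j(\phi) \mapsto k(\hat\phi)$ on the class of any $*$-homomorphism $\phi:L(E)\to\fB$.

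For (3), I would apply Example \ref{ex:full*} with the pair (paper's $\fA$, paper's $\fB$) replaced by $(C^*(E),\fA)$, which is legitimate because $C^*(E)$ is separable, unital, simple (by the spi hypothesis), and nuclear (as the $C^*$-algebra of a finite graph). This yields $[[C^*(E),\fB]]^f_{M_2} \cong KK(C^*(E),\fA)$. To then pass from $KK(C^*(E),\fA)$ to $KK(C^*(E),\fB)$, I would use that the unital inclusion $\C\hookrightarrow\cO_\infty$ is a KK-equivalence (Cuntz), whence $\iota:\fA\to\fB$, $a\mapsto 1\otimes a$, is a KK-equivalence; the map $\iota_*$ provides the desired isomorphism. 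Each of the three identifications is natural in $\fA$.

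Composing these three natural isomorphisms gives the required bijection. Tracing a representative $\phi:L(E)\to\fB$ through the chain: step (1) produces $j(\phi)$, step (2) produces $k(\hat\phi)$, and the inverse of step (3) sends the KK-class $k(\hat\phi)$ of a full $*$-homomorphism to its own $M_2$-homotopy class $[[\hat\phi]]_{M_2}$. Thus the composite is the map $[\phi]_{M_2}\mapsto[[\hat\phi]]_{M_2}$ induced by completion.

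The main obstacle I foresee is simply careful bookkeeping: one must verify that the composite really does equal the completion map on representatives and that the full chain is natural in $\fA$. Both points reduce to the explicit commutativity in Proposition \ref{prop:comp} and the naturality of the Kirchberg--Phillips classification \eqref{map:homopis}, so no new ideas should be required — only a clean verification.
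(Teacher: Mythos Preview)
Your argument is correct and follows the same three-step chain as the paper's own proof: identify the left-hand side with $kk(L(E),\cO_\infty\sotimes\fA)$ via \eqref{map:homoleav}, pass to $KK(C^*(E),\cO_\infty\sotimes\fA)$ using that $\comp$ is an isomorphism for properly infinite targets, and then invoke \eqref{map:homopis}. The only difference is cosmetic: the paper compresses your step~(3) into a bare citation of \eqref{map:homopis} and Lemma~\ref{lem:khpi}, leaving the passage $KK(C^*(E),\fA)\cong KK(C^*(E),\cO_\infty\sotimes\fA)$ via the $KK$-equivalence $\C\hookrightarrow\cO_\infty$ implicit, whereas you spell it out and check that the composite agrees with completion on representatives.
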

\begin{proof} Properly infinite $C^*$-algebras are $K$-regular by Lemma \ref{lem:khpi}. In particular this applies to $\cO_\infty\sotimes\fA$, and thus \eqref{map:homoleav} provides an isomorphism
between the left hand side of \eqref{map:comp2} and $kk(L(E),\cO_\infty\sotimes\fA)$. The proof follows from this, together with Proposition \ref{prop:comp}, Lemma \ref{lem:khpi}, and \eqref{map:homopis}.
\end{proof}

\begin{coro}\label{coro:comp1}
Let $E$ and $F$ be finite spi graphs. Then there are
\item[i)] an isomorphism of abelian groups 
\[
[L(E),C^*(F)]_{M_2}\setminus\{0\}\iso [[C^*(E),C^*(F)]]_{M_2}\setminus\{0\}
\]
and
\item[ii)] an exact sequence
\[
0\to \BF(E)^{\vee}\otimes\BF(F)\otimes \C^*\to [L(E),L(F)]_{M_2}\setminus\{0\}\to [[C^*(E),C^*(F)]]_{M_2}\setminus\{0\}\to 0.
\]
\end{coro}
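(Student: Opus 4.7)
The plan is to reduce both statements to bivariant $K$-theory computations already at hand. First I would record that, since $E$ and $F$ are spi, the algebras $L(F)$, $C^*(E)$ and $C^*(F)$ are all simple purely infinite unital; $C^*(E)$ and $C^*(F)$ are moreover separable and nuclear, and Kirchberg's theorem gives $\cO_\infty\sotimes C^*(F)\cong C^*(F)$.

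For part (i), I would apply Proposition \ref{prop:comp1} with $\fA=C^*(F)$ and transport along the Kirchberg isomorphism to obtain a natural bijection
\[
[L(E),C^*(F)]^{\vf}_{M_2}\iso[[C^*(E),C^*(F)]]^f_{M_2}.
\]
Since $C^*(F)$ is spi, \eqref{eq:nozero1} identifies the left side with $[L(E),C^*(F)]_{M_2}\setminus\{0\}$; since $C^*(E)$ is simple and nuclear and $C^*(F)$ is simple, \eqref{eq:nozero2} identifies the right side with $[[C^*(E),C^*(F)]]_{M_2}\setminus\{0\}$. The abelian group structures coming from \eqref{map:homoleav} and \eqref{map:homopis} transport through the bijection.

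For part (ii), I would first identify the non-zero $M_2$-homotopy classes on each side with bivariant Hom-groups. On the algebraic side, \eqref{eq:nozero1} and \eqref{map:homoleav} (applicable because $L(F)$ is spi unital, hence contains elements $x_i,y_j$ with $y_ix_j=\delta_{i,j}$) yield $[L(E),L(F)]_{M_2}\setminus\{0\}\cong kk(L(E),L(F))$; on the $C^*$-side, the argument of part (i) gives $[[C^*(E),C^*(F)]]_{M_2}\setminus\{0\}\cong KK(C^*(E),C^*(F))$. Under these identifications the desired exact sequence is precisely the one from Theorem \ref{thm:onto}(ii).

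The main point demanding care is the compatibility of the middle map: I need to check that, under the identifications above, the homomorphism $\compu\colon kk(L(E),L(F))\to KK(C^*(E),C^*(F))$ corresponds to the map sending $[\phi]$ to $[\hat{\phi}]$ for a unital $*$-homomorphism $\phi\colon L(E)\to L(F)$. This is what the commuting square of Theorem \ref{thm:onto}(i) asserts on representing morphisms, so provided \eqref{map:homoleav} and \eqref{map:homopis} send $[\phi]$ to $j(\phi)$ and $[\hat{\phi}]$ to $k(\hat{\phi})$, the compatibility is automatic. Granted this naturality, the rest is routine bookkeeping.
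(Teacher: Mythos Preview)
Your proposal is correct and follows essentially the same route as the paper: for (i) you invoke Proposition~\ref{prop:comp1} with $\fA=C^*(F)$, absorb $\cO_\infty$ via Kirchberg, and strip the ``very full''/``full'' decorations using \eqref{eq:nozero1} and \eqref{eq:nozero2}; for (ii) you translate both sides to $kk$ and $KK$ via \eqref{map:homoleav} and \eqref{map:homopis} and read off the exact sequence of Theorem~\ref{thm:onto}(ii). Your extra care about the compatibility of the middle map with $\compu$ is welcome but not strictly needed for the statement as written, which only asserts the existence of such an exact sequence; the paper simply transports the sequence of Theorem~\ref{thm:onto}(ii) along the two group isomorphisms.
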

\begin{proof} Graph $C^*$-algebras of finite graphs are unital, separable and nuclear; moreover, because $F$ is simple purely infinite, the same is true of $C^*(F)$. Hence $[L(E),C^*(F)]_{M_2}\setminus\{0\}\cong [[C^*(E),C^*(F)]]_{M_2}\setminus\{0\}$ by  Proposition \ref{prop:comp1} together with Example \ref{ex:full*}, \eqref{eq:nozero1} and \eqref{eq:nozero2}. The exact sequence of the corollary follows from that of Theorem \ref{thm:onto} and the isomorphisms 
\eqref{map:homoleav} and \eqref{map:homopis}.
\end{proof}

We say that a $*$-homomorphism $\phi:A\to B$ of unital $*$-algebras has \emph{property (P)} if $\phi(1)B$ contains an isometry. Put
\[
\hom_{\ahas}(A,B)\supset \hom_{\ahas}(A,B)^{P}=\{\phi \text{ has property (P)}\}.
\]

\begin{thm}\label{thm:semimain}
Let $E$ and $F$ be finite, spi graphs. Then

\item[i)] The map $\hom_{\ahas}(L(E),L(F))^{P}\to [[C^*(E),C^*(F)]]_{M_2}\setminus\{0\}$, $\phi\mapsto [\hat{\phi}]$ is onto and maps the subset of unital $*$-homomorphisms onto the subset of homotopy classes of unital $C^*$-algebra homomorphisms.

\item[ii)] Let $\phi\in \hom_{\ahas}(L(E),L(F))^P$. Then $\hat{\phi}:C^*(E)\to C^*(F)$ is an $M_2$-continuous homotopy equivalence if and only if $\phi$ is a polynomial $M_2$-homotopy equivalence. If furthermore $\phi$ is unital, then $\hat{\phi}$ is a continuous homotopy equivalence if and only if $\phi$ is a polynomial homotopy equivalence.
\end{thm}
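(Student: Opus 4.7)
The map $\phi\mapsto[\hat\phi]$ is well-defined and lands in the nonzero classes, since property (P) forces $\hat\phi(1)$ to dominate an isometry and hence to be a nonzero projection. For surjectivity, fix $[\xi]\in[[C^*(E),C^*(F)]]_{M_2}\setminus\{0\}$ and use the surjection in Corollary \ref{coro:comp1}(ii) to produce an algebra homomorphism $\phi_0\colon L(E)\to L(F)$ with $[\widehat{\phi_0}]_{M_2}=[\xi]_{M_2}$; by \eqref{eq:nozero1}, $\phi_0$ is very full. The task reduces to polishing $\phi_0$ into a $*$-homomorphism with property (P), through two moves that preserve the $M_2$-homotopy class. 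First, since $L(F)$ is spi, the nonzero idempotent $\phi_0(1)$ is Murray--von Neumann equivalent to a projection $p$; conjugating $\phi_0$ by the implementing partial isometries---a standard manipulation that does not alter the $M_2$-homotopy class, as is built into the group law on $[L(E),L(F)]^{\vf}_{M_2}$ of \cite{classinvo}*{Example 13.19}---yields $\phi_1$ with $\phi_1(1)=p$. Second, pure infinite simplicity supplies an isometry $s\in pL(F)$ with $s^*s=1$, and the composite $\operatorname{Ad}(s)\circ\phi_1$ is a $*$-homomorphism $\phi$ satisfying property (P), still $M_2$-homotopic to $\phi_0$, so $[\widehat{\phi}]_{M_2}=[\xi]_{M_2}$. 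For the unital refinement, if the representative $\xi$ is unital then $[p]=[1_{L(F)}]$ in $K_0(L(F))=K_0(C^*(F))$, so in spi $L(F)$ the projection $p$ is MvN-equivalent to $1_{L(F)}$, and a further conjugation upgrades $\phi$ to a unital $*$-homomorphism.

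\textbf{Part (ii).} The forward direction is immediate from functoriality of the completion. Conversely, if $\hat\phi$ is an $M_2$-continuous homotopy equivalence, then $k(\hat\phi)=\compu(j(\phi))$ by Theorem \ref{thm:onto}(i) is invertible in $KK$, so by the conservativity of $\compu$ (Theorem \ref{thm:onto}(iii)) $j(\phi)$ is invertible in $kk(L(E),L(F))$. Lifting the $kk$-inverse of $j(\phi)$ through \eqref{map:homoleav} produces a very full algebra homomorphism $\psi\colon L(F)\to L(E)$, and the identities $j(\psi\circ\phi)=\id_{L(E)}$, $j(\phi\circ\psi)=\id_{L(F)}$ in $kk$ transport back through \eqref{map:homoleav} to $\psi\circ\phi\sim_{M_2}\id$ and $\phi\circ\psi\sim_{M_2}\id$; this makes $\phi$ a polynomial $M_2$-homotopy equivalence. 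For the unital statement I plan to combine the same conservativity argument with Lemma \ref{lem:m2htpy}: since $L(E)$ and $L(F)$ are spi and in particular properly infinite, the appendix's equivalence permits replacement of $M_2$-homotopy witnesses by honest homotopy witnesses, while the unitality of $\phi$ together with the very fullness of $\psi$ provides the compressions needed to make the inverse $\psi$ and both homotopies go through unital maps.

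The step I expect to be the main obstacle is precisely this unital refinement in part (ii): converting the $M_2$-homotopy equivalence provided by the conservativity argument into an honest, unital polynomial homotopy equivalence without destroying the invertibility data. This requires a careful intertwining of the algebraic group law on $[L(F),L(E)]^{\vf}_{M_2}$ from \cite{classinvo}*{Example 13.19} with the stabilization trick of Lemma \ref{lem:m2htpy}, making sure that the unital compression from $M_2 L(F)$ back to $L(F)$ is compatible with the $kk$-inverse and preserves the relevant homotopies.
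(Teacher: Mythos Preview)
Your argument for part (i) has a genuine gap. The surjection in Corollary \ref{coro:comp1}(ii) gives you only an \emph{algebra} homomorphism $\phi_0:L(E)\to L(F)$, not a $*$-homomorphism; in particular $\widehat{\phi_0}$ is not defined, so the equation $[\widehat{\phi_0}]_{M_2}=[\xi]_{M_2}$ is meaningless as written. More seriously, conjugation cannot manufacture $*$-preservation: if $\phi_1(a^*)\ne\phi_1(a)^*$ for some $a$, then
\[
(\ad(s)\circ\phi_1)(a^*)=s\,\phi_1(a^*)\,s^*\ne s\,\phi_1(a)^*s^*=\bigl((\ad(s)\circ\phi_1)(a)\bigr)^*,
\]
so $\ad(s)\circ\phi_1$ is again not a $*$-homomorphism. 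The paper avoids this entirely by never passing through a bare algebra homomorphism: it first invokes \cite{classinvo}*{Theorem 9.4} to produce directly a $*$-homomorphism $\phi$ with property (P) (unital when appropriate) realizing the prescribed map $\xi_0=K_0(\xi)$, and then corrects the remaining defect $\eta=\xi-k(\hat\phi)\in\ker K_0$ by twisting generators with a unitary $u\in L(F)_\phi$, setting $\phi^u(e)=u\phi(e)$. Both the lifting theorem and the unitary twist are operations that preserve the involution; that is the whole point.

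For part (ii), your $M_2$ argument via conservativity of $\compu$ is the paper's argument, which then cites \cite{cm2}*{Theorem 5.8} for the passage from a $kk$-isomorphism back to a polynomial $M_2$-homotopy equivalence. Your plan for the unital refinement, however, cannot work: Lemma \ref{lem:m2htpy} is a statement about $C^*$-algebras and continuous homotopy and says nothing about polynomial homotopy in $\aha$; applying it to $L(E)$ and $L(F)$ is not legitimate. The paper instead invokes the argument at the end of the proof of \cite{cm2}*{Theorem 6.1}, which is precisely the algebraic upgrade from a unital polynomial $M_2$-homotopy equivalence to a unital polynomial homotopy equivalence. Note also that the ``forward'' direction of (ii) is not literally functoriality of completion, since a polynomial homotopy inverse $\psi$ need not preserve $*$ and hence has no completion; the correct route is $j(\phi)$ invertible $\Rightarrow k(\hat\phi)=\compu(j(\phi))$ invertible, followed by \eqref{map:homopis}.
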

\begin{proof} By Example \ref{ex:full*}, we may replace $[[C^*(E),C^*(F)]]_{M_2}\setminus\{0\}$ by
$KK(C^*(E),C^*(F))$ in the statement of the theorem. Let $\xi\in KK(C^*(E),C^*(F))$ and let $\xi_0=K_0(\xi):\BF(E)\to \BF(F)$. Let $K_0^*$ be the Grothendieck group of projections as defined in \cite{classinvo}*{Section 7} and let $\can'_E:\BF(E)\to K_0(L(E))^*$ be the split monomorphism of \cite{classinvo}*{Remark 7.5}. By \cite{classinvo}*{Theorem 9.4} applied to $\can'_F\circ\xi_0$ there is a $*$-homomorphism $\phi:L(E)\to L(F)$ with property (P)
such that $K_0(\phi)^*\circ\can'_E=\can'_F\circ \xi_0$, which can be taken unital if $\xi_0$ is. Thus $K_0(\phi)=\xi_0$ by \cite{classinvo}*{Remark 7.5} and therefore 
\[
\eta:=\xi-k(\hat{\phi})\in \cL=\ker(K_0:KK(C^*(E),C^*(F))\to \hom(\BF(E),\BF(F))).
\] 
Let $L(F)_{\phi}=\bigoplus_{e\in E^1}\phi(ee^*)L(F)\phi(ee^*)$ and let $K_1^*$ be unitary $K_1$ as defined in \cite{classinvo}*{Equation (10.1)}. Put 
\[
\cK=\ker(K_0:kk(L(E),L(F))\to \hom(\BF(E),\BF(F)))
\]
We have a commutative diagram
\[
\xymatrix{K_1(L(F)_\phi)^*\ar[r]^*+[o][F-]{1}_{\sim}\ar[d]&(K_1(L(F))^*)^{E^1}\ar[d]^*+[o][F-]{2}&\\
K_1(L(F)_\phi)\ar[r]^{\sim}&K_1(L(F))^{E^1}\ar@{->>}[d]^*+[o][F-]{3}\ar@{->>}[r]&\cK\ar[d]\\
&\ker(I-A_F^t)^{E^1}\ar@{->>}[r]^(.7)*+[o][F-]{4}&\cL}
\]
The map labelled $1$ and the parallel map below are isomorphisms by Morita invariance \cite{classinvo}*{Lemma 10.2}. By \cite{cm2}*{Proposition 4.6}, the composite of the maps labelled $2$ and $3$ is onto. Moreover, because $F$ is spi, then by \cite{classinvo}*{Lemma 8.7}, $L(F)_\phi$ is strictly properly infinite in the sense of
\cite{classinvo}*{Section 8}
and thus, by \cite{classinvo}*{Proposition 10.4} $K_1(L(F)_\phi)^*$ is a quotient of the unitary group of $L(F)_\phi$. Summing up, there is a unitary $u\in \cU(L(F)_\phi)$ whose class in $K_1(L(F)_\phi)^*$ maps to $\eta$ through the composite of the maps labelled $1$ through $4$. By \cite{classinvo}*{Lemma 13.7} applied to $R=L(F)\oplus L(F)$ equipped with the involution $(x,y)^*=(y^*,x^*)$, the sum of the image of $u$ in $kk(L(E),L(F))$ with the class of $\phi$ is the class of the $*$-homomorphism $\phi^u:L(E)\to L(F)$ that maps $e\mapsto u\phi(e)$ which again has property (P). By construction, $k(\widehat{\phi^u})=\xi$. This proves part i). Let $\phi:L(E)\to L(F)$ be a $*$-homomorphism with property (P) such that $k(\hat{\phi})$ is an isomorphism. Then $j(\phi)$ is an isomorphism by parts i) and iii) of Theorem \ref{thm:onto} and therefore $\phi$ is an $M_2$-homotopy equivalence by \cite{cm2}*{Theorem 5.8}. If $\phi$ is unital and a polynomial $M_2$-homotopy equivalence, then it is a homotopy equivalence, by the argument at the end of the proof of \cite{cm2}*{Theorem 6.1}. This concludes the proof.
\end{proof}
\begin{coro}\label{coro:semimain} If $E$ and $F$ are finite spi graphs and either of $\BF(E)$, $\BF(F)$ is finite, then for every $*$-homomorphism $\xi:L(E)\to L(F)$ there exists a $*$-homomorphism $\phi:L(E)\to L(F)$ such that $\phi\sim_{M_2}\xi$. If moreover $\xi$ is unital, then $\phi$ can be chosen unital and so that $\phi\sim \xi$. 
\end{coro}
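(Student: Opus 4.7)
The plan is to use the surjectivity in Theorem \ref{thm:semimain}(i) to realize the $KK$-class of $\hat\xi$ by some Leavitt $*$-homomorphism $\phi$, and then to use the $\BF$-finiteness hypothesis, via Corollary \ref{coro:onto}, to convert the resulting $C^*$-level $M_2$-homotopy into an algebraic $M_2$-homotopy. The finiteness assumption is precisely what promotes the surjection $\compu$ of Theorem \ref{thm:onto}(ii) to an isomorphism, enabling homotopy data to be pulled back from $KK$ to $kk$.

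First I would dispose of the trivial case $\xi=0$ by taking $\phi=0$. For $\xi\ne 0$, the projection $\xi(1)$ is a nonzero idempotent in the spi algebra $L(F)$, hence very full by Example \ref{ex:vfull}, so $\xi$ itself is a very full $*$-homomorphism. I then apply Theorem \ref{thm:semimain}(i) to the class $[\hat\xi]_{M_2}\in[[C^*(E),C^*(F)]]_{M_2}\setminus\{0\}$ to obtain $\phi\in\hom_{\ahas}(L(E),L(F))^{P}$ with $\hat\phi\approx_{M_2}\hat\xi$, choosing $\phi$ unital when $\xi$ is.

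Next I would transfer this homotopy to $kk$. Since $k\colon C^*{-}\alg\to KK$ factors through continuous $M_2$-homotopy, $\hat\phi\approx_{M_2}\hat\xi$ gives $k(\hat\phi)=k(\hat\xi)$, and the commuting square of Theorem \ref{thm:onto}(i) then reads $\compu(j(\phi))=\compu(j(\xi))$. By Corollary \ref{coro:onto} the hypothesis on $\BF$ makes $\compu$ an isomorphism, so $j(\phi)=j(\xi)$ in $kk(L(E),L(F))$. Since both $\xi$ and $\phi$ are very full, the isomorphism \eqref{map:homoleav} of Example \ref{ex:vfull} identifies this with equality in $[L(E),L(F)]^{\vf}_{M_2}$, that is, $\phi\sim_{M_2}\xi$.

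For the moreover, $\phi$ has been chosen unital and the upgrade from $\sim_{M_2}$ to $\sim$ between unital $*$-homomorphisms proceeds exactly as at the end of the proof of Theorem \ref{thm:semimain}(ii), which in turn invokes the concluding argument of \cite{cm2}*{Theorem 6.1}. The main obstacle is the translation step from continuous to algebraic $M_2$-homotopy, and this is precisely where the $\BF$-finiteness is indispensable: without it, Theorem \ref{thm:onto}(ii) only gives a surjection $\compu$ with nontrivial kernel $\BF(E)^\vee\otimes\BF(F)\otimes\C^*$, and one can no longer conclude $j(\phi)=j(\xi)$ from equality of their completions in $KK$.
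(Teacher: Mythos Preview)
Your argument follows precisely the route the paper's one-line proof (``Immediate from Corollary \ref{coro:onto} and Theorem \ref{thm:semimain}'') intends, so in that sense you are on target. But note that the corollary \emph{as literally stated} is trivial: if $\xi$ is already a $*$-homomorphism $L(E)\to L(F)$, one simply takes $\phi=\xi$. The fact that the paper invokes Corollary \ref{coro:onto} and Theorem \ref{thm:semimain} for something so trivial, together with the surrounding discussion of non-involutive homotopy equivalences, strongly indicates a typo: the intended hypothesis is that $\xi$ be a plain algebra homomorphism, not necessarily involution-preserving. That is the non-trivial assertion --- every algebra map is $M_2$-homotopic to a $*$-map --- and it is the one your machinery actually proves.

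Under that intended reading your strategy survives with one correction: you cannot form the completion $\hat\xi$ of a non-$*$-homomorphism, so the line ``apply Theorem \ref{thm:semimain}(i) to the class $[\hat\xi]_{M_2}$'' breaks. The fix is to bypass completion and feed $j(\xi)\in kk(L(E),L(F))$ directly into the functor $\compu$ of Theorem \ref{thm:onto}, obtaining a class in $KK(C^*(E),C^*(F))$; realize that class via \eqref{map:homopis} and Example \ref{ex:full*} as some $[\psi]\in[[C^*(E),C^*(F)]]_{M_2}\setminus\{0\}$ (handling the zero case separately as you did), and then lift via Theorem \ref{thm:semimain}(i) to a $*$-homomorphism $\phi$ with $k(\hat\phi)=\compu(j(\xi))$, unital if $\xi$ is. From there your argument runs unchanged: Theorem \ref{thm:onto}(i) gives $\compu(j(\phi))=\compu(j(\xi))$, Corollary \ref{coro:onto} turns this into $j(\phi)=j(\xi)$, and \eqref{map:homoleav} yields $\phi\sim_{M_2}\xi$. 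One further caution on the ``moreover'': the passage at the end of the proof of Theorem \ref{thm:semimain}(ii) upgrades $M_2$-homotopy \emph{equivalence} to plain homotopy equivalence, whereas here you need to pass from $\phi\sim_{M_2}\xi$ between two fixed unital maps to $\phi\sim\xi$. The underlying mechanism from \cite{cm2}*{Theorem 6.1} (cancelling the corner embedding using proper infiniteness of $L(F)$) does cover this, but you should say so rather than cite Theorem \ref{thm:semimain}(ii) as if it were the same statement.
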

\begin{proof} Immediate from Corollary \ref{coro:onto} and Theorem \ref{thm:semimain}. 
\end{proof}

\begin{ex}\label{ex:splice} Let $\cR_n$ be the graph consisting of $1$ vertex and $n$ loops and $\cR_n^{-}$ its Cuntz' splice \cite{flow}*{Definition 2.11}. Write $L_n=L(\cR_n)$, $\cO_n=C^*(\cR_n)$, $L_{n^-}=L(\cR_{n^-})$ and $\cO_{n^-}=C^*(\cR_{n^-})$. Then by Theorem \ref{thm:semimain} and Corollary \ref{coro:onto}, every $*$-isomorphism
$\cO_n\iso\cO_{n-}$ is homotopic to the completion of a unital $*$-homomorphism $\phi:L_n\to L_{n^-}$, any such $\phi$ is an algebraic homotopy equivalence, and its homotopy class depends only on the $C^*$-homotopy class of $\xi$. 
\end{ex}

\appendix

\section{The \topdf{$M_2$}{M2}-homotopy relation}\label{sec:M2}

\begin{lem}\label{lem:abc}
Let $\fA,\fB\subset \fC$ be $C^*$-algebras and let $\inc_{\fA}$ and $\inc_{\fB}$ the inclusion maps. Let $x\in \fC$ such that $x\fA x^*\subset \fB$ and $ax^*xa'=aa'$ for all $a,a'\in \fA$. Then $\ad(x):\fA\to \fB$, $\ad(x)(a)=xax^*$ is a $*$-homomorphism and $\inc_{\fB}\circ\ad(x)\approx_{M_{2}}\inc_{\fA}$. If moreover $\fA=\fB$ and $\fA x\subset \fA$, then $\ad(x)\approx_{M_{2}}\id_{\fA}$. 
\end{lem}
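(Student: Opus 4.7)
The plan is to construct an explicit rotation-style path of $*$-homomorphisms. Verifying that $\ad(x)\colon\fA\to\fB$ is a $*$-homomorphism is straightforward: $\C$-linearity and $\ad(x)(a)^*=xa^*x^*=\ad(x)(a^*)$ are immediate, while multiplicativity collapses in one step via the hypothesis, $\ad(x)(a)\ad(x)(b)=xax^*xbx^*=x(ax^*xb)x^*=xabx^*$.

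For the $M_2$-homotopy $\inc_\fB\circ\ad(x)\approx_{M_2}\inc_\fA$, I will define, for $t\in[0,\pi/2]$, a map $\phi_t\colon\fA\to M_2\fC$ by
\[
\phi_t(a)=\begin{pmatrix}\cos^2 t\,a & \cos t\sin t\,ax^*\\ \cos t\sin t\,xa & \sin^2 t\,xax^*\end{pmatrix}.
\]
The involutivity check $\phi_t(a)^*=\phi_t(a^*)$ is immediate from $(xa)^*=a^*x^*$, $(ax^*)^*=xa^*$ and $(xax^*)^*=xa^*x^*$. Multiplicativity is a finite bookkeeping: in each of the four entries of $\phi_t(a)\phi_t(b)$, one of the two summands contains a central factor $x^*x$ sandwiched between $\fA$-elements, which is collapsed via $ax^*xb=ab$; the remaining coefficients combine via $\cos^2 t+\sin^2 t=1$ to produce the corresponding entry of $\phi_t(ab)$. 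Since $\phi_0(a)=\diag(a,0)=\iota_2\inc_\fA(a)$ and $\phi_{\pi/2}(a)=\diag(0,xax^*)$, I then concatenate with the path $a\mapsto R_s\,\diag(0,xax^*)\,R_s^*$ given by the scalar-unitary rotation $R_s=\bigl(\begin{smallmatrix}\cos s & -\sin s\\ \sin s & \cos s\end{smallmatrix}\bigr)\in M_2(\C)\subset M_2(\widetilde{\fC})$, which carries $\diag(0,xax^*)$ to $\diag(xax^*,0)=\iota_2\inc_\fB\ad(x)(a)$, completing the desired $M_2$-homotopy in $\fC$.

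For the refined assertion under $\fA=\fB$ and $\fA x\subset\fA$: taking adjoints yields $x^*\fA\subset\fA$, and combined with $x\fA x^*\subset\fA$ this arranges that all four entries of $\phi_t(a)$ — namely $a$, $ax^*$, $xa$ and $xax^*$ — lie in $\fA$, so the path $\phi_t$ takes values in $M_2\fA$; the rotation step trivially preserves $M_2\fA$, hence we obtain $\ad(x)\approx_{M_2}\id_\fA$ inside $\fA$. The only nontrivial step throughout is the entry-by-entry $*$-homomorphism verification for $\phi_t$, which is purely combinatorial: making sure the $\cos^2+\sin^2=1$ cancellations line up across all four matrix entries after applying the collapse $ax^*xb=ab$.
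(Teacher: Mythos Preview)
Your approach is correct and is precisely the explicit version of what the paper sketches: the paper simply cites the algebraic analogue \cite{classinvo}*{Lemma 8.12} and notes that the two corner embeddings of $\fC$ into $M_2\fC$ are $C^*$-homotopic, which is exactly your scalar rotation $R_s$. Your path $\phi_t$ is the standard implementation; writing $v_t$ for the column $(\cos t,\ \sin t\cdot x)$ one has $\phi_t(a)=v_t\,a\,v_t^{\,*}$, and multiplicativity follows since $v_t^{\,*}v_t=\cos^2 t+\sin^2 t\,x^*x$ acts as the identity between elements of $\fA$.

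There is, however, a slip in your justification of the refined assertion. From $\fA x\subset\fA$ you correctly obtain $x^*\fA\subset\fA$, but you then assert that this, together with $x\fA x^*\subset\fA$, forces $ax^*\in\fA$ and $xa\in\fA$. The containments $x^*\fA\subset\fA$ and $\fA x^*\subset\fA$ are not interchangeable, and similarly for $\fA x\subset\fA$ versus $x\fA\subset\fA$; your stated reasoning does not give the two off-diagonal entries. The conclusion is nevertheless true, but one must use the remaining hypothesis: for $c=xa$ one has $cc^*=x(aa^*)x^*\in x\fA x^*\subset\fA$, and for an approximate unit $(e_\lambda)$ of $\fA$ one has $e_\lambda c=(e_\lambda x)a\in\fA$ while
\[
\|c-e_\lambda c\|^2=\|cc^*-e_\lambda cc^*-cc^*e_\lambda+e_\lambda cc^*e_\lambda\|\longrightarrow 0
\]
since $cc^*\in\fA$; hence $xa=\lim_\lambda e_\lambda c\in\fA$, and then $ax^*=(xa^*)^*\in\fA$. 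With this correction your path $\phi_t$ indeed lands in $M_2\fA$ and the argument goes through.
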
 
\begin{proof} This is the $C^*$-algebra version of \cite{classinvo}*{Lemma 8.12}; it is proved in the same way, using the fact that the two canonical corner inclusions of a $C^*$-algebra into the $2\times 2$ matrices are $C^*$-homotopic. 
\end{proof} 

Let $\fB$ be a properly infinite $C^*$-algebra and $s=(s_1,s_2)\in\fB^2$ a pair of orthogonal isometries. Then the map
\begin{equation}\label{map:boxplus}
\boxplus_s:\fB\oplus\fB\to \fB,\, (a,b)\mapsto s_1as_1^*+s_2bs_2^*
\end{equation}
is a $C^*$-algebra homomorphism, and if $\fA\triqui\fB$, it restricts to a homomorphism $\fA\oplus\fA\to\fA$.

\begin{lem}\label{lem:boxsum}
Let $\fA$, $\fB$ and $\fC$ be $C^*$-algebras with $\fC$ properly infinite and $\fB\triqui\fC$ an ideal. Let $t_1,t_2\in\fC$ be orthogonal isometries. Then $\boxplus_t$ makes $[[\fA,\fB]]_{M_2}$ into an abelian monoid.
\end{lem}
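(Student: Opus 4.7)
The plan is to verify the four axioms for an abelian monoid: well-definedness of $\boxplus_t$ on $M_2$-homotopy classes, existence of a neutral element (the class of the zero map), commutativity, and associativity.

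For well-definedness, set $T_i := \diag(t_i, t_i) \in M_2\fC$ for $i = 1, 2$. These form a pair of orthogonal isometries, and $M_2\fB \triqui M_2\fC$. A direct check yields $\iota_2 \circ (\phi \boxplus_t \psi) = (\iota_2 \phi) \boxplus_T (\iota_2 \psi)$, so applying $\boxplus_T$ pointwise along the parameter of an $M_2$-homotopy carries $M_2$-homotopies of $\phi$ and $\psi$ separately to one of $\phi \boxplus_t \psi$.

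The remaining three axioms share a single recurring pattern. In each case I exhibit an element $x \in \fC$ such that $\ad(x) \circ \pi$ realizes the target expression (with $\pi$ the appropriate iterated sum), and then invoke Lemma~\ref{lem:abc} to conclude $\ad(x) \circ \pi \approx_{M_2} \pi$. For the neutral element axiom, $\phi \boxplus_t 0 = \ad(t_1) \circ \phi$, and the hypotheses of Lemma~\ref{lem:abc} hold with $x = t_1$ since $t_1^* t_1 = 1$ and $t_1 \phi(\fA) t_1^* \subset \fB$ by the ideal property. For commutativity, put $y := t_1 t_2^* + t_2 t_1^*$ and $p := t_1 t_1^* + t_2 t_2^*$; a short calculation using $t_i^* t_j = \delta_{ij}$ gives $y^* y = y y^* = p$ and $\ad(y) \circ (\phi \boxplus_t \psi) = \psi \boxplus_t \phi$. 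The hypothesis $a y^* y a' = a a'$ of Lemma~\ref{lem:abc} then holds because $(\phi \boxplus_t \psi)(\fA) \subset p \fB p$, on which $p$ acts as the identity. For associativity, the two iterated sums are indexed by the triples of orthogonal isometries $(t_1^2, t_1 t_2, t_2)$ and $(t_1, t_2 t_1, t_2^2)$ in $\fC$, and the element $z := t_1 (t_1^*)^2 + t_2 t_1 t_2^* t_1^* + t_2^2 t_2^* \in \fC$ intertwines them via $\ad(z)$, with $z^* z$ equal to the projection cutting out the image of the first sum, so Lemma~\ref{lem:abc} again applies.

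The principal subtlety is that Lemma~\ref{lem:abc} nominally produces $M_2$-homotopies of maps into $\fC$, whereas the relation $\approx_{M_2}$ on $[[\fA,\fB]]_{M_2}$ requires a homotopy path living in $M_2 \fB$. This is resolved by the ideal property $\fB \triqui \fC$: the homotopy from Lemma~\ref{lem:abc} is implemented by conjugation by a path of unitaries $V_s$ in (the unitization of) $M_2 \fC$, and since $\pi$ lands in $\fB$, each element $V_s \iota_2(\pi(a)) V_s^*$ automatically stays in the ideal $M_2 \fB$. The main obstacle, then, is just this careful bookkeeping about where the homotopies live; once the correct element $x$ is identified in each case, the remaining algebraic verifications are routine.
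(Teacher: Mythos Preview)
Your proof is correct and follows exactly the approach the paper intends: its proof consists of the single line ``Straightforward from Lemma~\ref{lem:abc},'' and you have carefully supplied the routine verifications (well-definedness, unit, commutativity, associativity) that this line elides, including the observation that the homotopies produced in $M_2\fC$ actually land in $M_2\fB$ because $\fB\triqui\fC$.
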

\begin{proof} Straightforward from Lemma \ref{lem:abc}.
\end{proof}
\begin{lem}\label{lem:samebox}
Let $\fB$ be a properly infinite $C^*$ algebra and let $s=(s_1,s_2)$ and $t=(t_1,t_2)$ such that $s_i^*s_j=t_i^*t_j=1$. Let $\fA\triqui\fB$ be a closed ideal. Then $\boxplus_s,\boxplus_t:\fA\oplus\fA\to\fA$ are $M_2$-homotopic. 
\end{lem}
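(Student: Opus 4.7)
The plan is to apply Lemma \ref{lem:abc} to the partial isometry
\[
X := s_1 t_1^* + s_2 t_2^* \in \fB.
\]
Using the orthogonality relations $s_i^* s_j = t_i^* t_j = \delta_{ij}$, a direct computation yields $X^* X = q$ and $XX^* = p$, where $p = s_1 s_1^* + s_2 s_2^*$ and $q = t_1 t_1^* + t_2 t_2^*$, together with $pX = Xq = X$ and, crucially,
\[
X\,\boxplus_t(a,b)\,X^* = \boxplus_s(a,b) \quad \text{for all }(a,b) \in \fA \oplus \fA.
\]
Moreover, since $t_i q = t_i$, the map $\boxplus_t$ takes values in the closed $C^*$-subalgebra $\fA' := q\fA q$, and symmetrically $\boxplus_s$ takes values in $\fA'' := p\fA p$; both are closed subalgebras of $\fB$ because $\fA$ is a closed ideal of $\fB$.

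Next, I will apply Lemma \ref{lem:abc} with ambient algebra $\fC = \fB$, with $\fA'$ and $\fA''$ playing the roles of $\fA$ and $\fB$ respectively, and with $x = X$. The inclusion hypothesis $X\fA'X^* \subseteq \fA''$ follows from $pX = Xq = X$ and the ideal property, and the identity $aX^*Xa' = aa'$ for $a, a' \in \fA'$ reduces to $aqa' = aa'$, which holds because $aq = a$ for $a \in q\fA q$. The lemma then produces an $M_2$-homotopy $\inc_{\fA''} \circ \ad(X) \approx_{M_2} \inc_{\fA'}$ as maps $\fA' \to \fB$. Precomposing with the corestricted $\boxplus_t : \fA \oplus \fA \to \fA'$ yields $\boxplus_s \approx_{M_2} \boxplus_t$ as maps from $\fA\oplus\fA$ into $\fB$. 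Because the homotopy of Lemma \ref{lem:abc} is implemented by conjugation by a continuous path of unitaries in $M_2(\widetilde\fB)$, and $M_2(\fA)$ is a closed ideal of $M_2(\widetilde\fB)$, the homotopy actually takes values in $M_2(\fA)$, so it is an $M_2$-homotopy in $\fA$ as required.

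The main obstacle is that $X$ is only a partial isometry: the identity $aX^*Xa' = aa'$ required by Lemma \ref{lem:abc} fails on all of $\fA$, since $X^*X = q \neq 1$ in general. The remedy is to restrict to the corner $\fA' = q\fA q$, where $q$ acts as a local unit, and to observe that the image of $\boxplus_t$ is automatically contained in $\fA'$, so no information is lost by this restriction. A minor technical point is transferring the homotopy from $\fB$ back to $\fA$, which is handled by the ideal hypothesis $\fA \triqui \fB$.
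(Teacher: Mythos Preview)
Your proof is correct and follows essentially the same approach as the paper's. The paper uses the partial isometry $u=t_1s_1^*+t_2s_2^*$ (the adjoint of your $X$) and applies Lemma~\ref{lem:abc} with source $p\fA p$ and target $\fA$, obtaining $\boxplus_t=\ad(u)\circ\boxplus_s\approx_{M_2}\boxplus_s$ directly; your version is slightly more careful in isolating both corners $q\fA q$ and $p\fA p$ and in explaining why the $M_2$-homotopy, a priori valued in $\fB$, actually lands in the ideal $\fA$.
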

\begin{proof} The element $u=t_1s_1^*+t_2s_2^*$ is a partial isometry with domain projection $p=\boxplus_s(1,1)=\sum_{i=1}^2s_is_i^*$. Then by Lemma \ref{lem:abc}, $\ad(u):p\fA p\to \fA$ is $M_{2}$-homotopic to the inclusion $p\fA p\subset \fA$. Hence $\boxplus_s\approx_{M_{2}}\ad(u)\circ\boxplus_s=\boxplus_t$. 
\end{proof}

Let $\fA$ and $\fB$ be $C^*$-algebras. By \cite{rostro}*{Section 8.1 and formula (8.1.3)}, the set
\[
[[\fA,\cK\sotimes\fB]]
\]
has a natural monoid structure. 

\begin{lem}\label{lem:m2htpy} Let $\fA$ and $\fB$ be $C^*$-algebras, with $\fB$ properly infinite. Let $p\in\cK$ be a rank one projection; set $\iota:\fB\to \cK\sotimes\fB$, $\iota(b)=p\sotimes b$. Then

\item[i)]$\iota$ induces an isomorphism of monoids
\[
\iota_*:[[\fA,\fB]]_{M_2}\iso[[\fA,\cK\sotimes\fB]]_{M_2}.
\]
\item[ii)] The canonical surjection is a monoid isomorphism
\[
[[\fA,\cK\sotimes\fB]]\iso [[\fA,\cK\sotimes\fB]]_{M_2}.
\]
Moreover both isomorphisms above restrict to bijections between the subsets of homotopy classes of full homomorphisms.
\end{lem}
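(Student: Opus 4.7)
The plan is to establish Part (ii) first and then use it to derive Part (i), in both cases combining Lemma \ref{lem:abc} with carefully chosen paths of isometries.

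For Part (ii), the key is that $\cK\sotimes\fB$ is stable, so its multiplier algebra contains $M(\cK)=B(H)$ acting on the first tensor factor and, in particular, carries orthogonal isometries $t_1,t_2$ with $t_1t_1^*+t_2t_2^*=1$. I will define $\pi:M_2(\cK\sotimes\fB)\to\cK\sotimes\fB$ by $(a_{ij})\mapsto\sum_{i,j} t_i a_{ij} t_j^*$, analogously to \eqref{map:boxplus}, and compute $\pi\circ\iota_2=\ad(t_1)$ and $\iota_2\circ\pi=\ad(U)$ for the isometry $U=\bigl(\begin{smallmatrix}t_1&t_2\\0&0\end{smallmatrix}\bigr)\in M(M_2(\cK\sotimes\fB))$. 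Lemma \ref{lem:abc} already gives that both composites are $M_2$-homotopic to the respective identities; to upgrade this to genuine $C^*$-homotopies, which is what Part (ii) actually requires, I will invoke the classical fact that any isometry in $B(H)$ whose cokernel has infinite rank is strictly path-connected to $1$, producing for $t_1$ (and analogously for $U$) a strictly continuous path of isometries through which $\ad(-)$ gives a norm-continuous homotopy of $*$-endomorphisms of $\cK\sotimes\fB$. Once $\pi\iota_2\approx\id$ and $\iota_2\pi\approx\id$ hold as honest $C^*$-homotopies, injectivity of the canonical surjection $[[\fA,\cK\sotimes\fB]]\onto[[\fA,\cK\sotimes\fB]]_{M_2}$ follows: from $\iota_2\phi_0\approx\iota_2\phi_1$ we deduce $\phi_0\approx\pi\iota_2\phi_0\approx\pi\iota_2\phi_1\approx\phi_1$. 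Compatibility of monoid structures comes from Lemma \ref{lem:samebox}, and fullness is preserved because $t_1t_1^*$ is a full projection in $M(\cK)$.

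For Part (i), using Part (ii) I restate the goal as $\iota_*:[[\fA,\fB]]_{M_2}\iso[[\fA,\cK\sotimes\fB]]$. That $\iota_*$ is a monoid homomorphism follows by choosing the multiplier-algebra isometries in Part (ii) to be $t_i=1\otimes s_i$, where $s_1,s_2\in\fB$ are orthogonal isometries furnished by proper infiniteness; a direct calculation then yields $\iota\circ\boxplus_s=\boxplus_t\circ(\iota\oplus\iota)$, and Lemma \ref{lem:samebox} removes dependence on the chosen isometries. The same $s_1,s_2$ give $\rho_2:M_2\fB\to\fB$, $(a_{ij})\mapsto\sum s_i a_{ij} s_j^*$, and Lemma \ref{lem:abc} yields $\rho_2\iota_2\approx_{M_2}\id_\fB$ and $\iota_2\rho_2\approx_{M_2}\id_{M_2\fB}$. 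Iterating with $2^n$ orthogonal isometries of $\fB$ delivers mutually inverse isomorphisms $[[\fA,\fB]]_{M_2}\iso[[\fA,M_{2^n}\fB]]_{M_2}$ for every $n$, compatibly under the corner inclusions $M_{2^n}\fB\subset M_{2^{n+1}}\fB\subset\cK\sotimes\fB$. Combined with Part (ii), this realizes $\iota_*$ as the colimit of isomorphisms. Fullness is preserved since $p\otimes\phi(1)$ generates $\cK\sotimes\fB$ as a closed ideal precisely when $\phi(1)$ generates $\fB$.

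The main obstacle I expect is the passage-to-direct-limit step in Part (i): a generic $\psi:\fA\to\cK\sotimes\fB$ need not factor through any finite-matrix algebra $M_{2^n}\fB$, so the naive surjectivity argument via iteration fails on the nose. To bypass this I will use the isometries $t_1,t_2\in M(\cK\sotimes\fB)$ from Part (ii) to show that $\psi$ is $C^*$-homotopic to $\ad(t_1)\circ\psi$ and iteratively to $\ad(t_1^n)\circ\psi$, which concentrate the image of $\psi$ into smaller and smaller corners; the absorption furnished by the $C^*$-homotopies of Part (ii) then makes the resulting limit homotopic to some $\iota\phi$. Injectivity follows symmetrically by transporting an $M_2$-homotopy $\iota\phi_0\approx_{M_2}\iota\phi_1$ back through $\rho_2$ and the finite-matrix isomorphisms, using that the corner inclusions $M_{2^n}\fB\subset\cK\sotimes\fB$ are compatible with $\iota$ up to the $M_2$-homotopies supplied by Lemma \ref{lem:abc}.
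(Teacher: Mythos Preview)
Your treatment of Part (ii) is sound and parallel in spirit to the paper's, though the paper is more economical: rather than building paths of isometries by hand, it observes that injectivity of $[[\fA,\cK\sotimes\fB]]\to[[\fA,\cK\sotimes\fB]]_{M_2}$ reduces to the single fact that the corner inclusion $\cK\hookrightarrow M_2\cK$ followed by an isomorphism $M_2\cK\cong\cK$ is $C^*$-homotopic to $\id_\cK$, and then quotes \cite{rostro}. Your route via strictly continuous paths of isometries in $B(H)$ works too and yields the same conclusion.

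The genuine gap is in Part (i). You correctly identify the obstacle---an arbitrary $\psi:\fA\to\cK\sotimes\fB$ need not factor through any $M_{2^n}\fB$, so $[[\fA,\cK\sotimes\fB]]_{M_2}$ is not the colimit of the finite-matrix stages---but your proposed workaround does not close it. Conjugating by powers of an isometry $t_1\in M(\cK)\subset M(\cK\sotimes\fB)$ moves $\psi$ into corners $t_1^n(\cK\sotimes\fB)t_1^{*n}$ that are each still isomorphic to $\cK\sotimes\fB$, not to $\fB$; nothing in that process produces a map with image in $p\sotimes\fB$, and the sequence $\ad(t_1^n)\circ\psi$ has no useful limit. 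The ``absorption'' you invoke from Part (ii) only tells you each $\ad(t_1^n)\circ\psi$ is homotopic to $\psi$, which is circular.

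What the paper does instead is bypass the limit entirely by writing down a single inverse map. Proper infiniteness of $\fB$ gives not just two but a countably infinite row $s=(s_1,s_2,\dots)\in\fB^{1\times\infty}$ of orthogonal isometries (e.g.\ take $s_n=s_2^{n-1}s_1$ from any pair $s_1,s_2$). Then $\ad(s):M_\infty\fB\to\fB$, $e_{ij}\otimes b\mapsto s_ibs_j^*$, is a $*$-homomorphism, hence extends to $\ad(s):\cK\sotimes\fB\to\fB$. Lemma \ref{lem:abc} applied to $x=s$ and to $x=s^*$ (in the appropriate multiplier algebras) shows $\ad(s)\circ\iota\approx_{M_2}\id_\fB$ and $\iota\circ\ad(s)\approx_{M_2}\id_{\cK\sotimes\fB}$, giving both injectivity and surjectivity of $\iota_*$ at once. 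This is the idea you are missing: rather than trying to approximate $\cK\sotimes\fB$ by finite matrices from below, compress all of $\cK\sotimes\fB$ into $\fB$ in one stroke using infinitely many isometries of $\fB$.
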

\begin{proof}
It is clear that $\iota_*$ is a monoid homomorphism. Let $s\in \fB^{1\times \infty}$ be a row matrix of orthogonal isometries. Then $\ad(s):M_\infty \fB\to\fB$, $\ad(s)(x)=sxs^*$ is a $*$-homomorphism and thus induces a $C^*$-algebra homomorphism $\cK\sotimes\fB\to \fB$. Composition with $\ad(s)$ defines a map $\ad(s)_*:[[\fA,\cK\sotimes\fB]]_{M_2}\to [[\fA,\fB]]_{M_2}$. It follows from Lemma \ref{lem:abc} that $\ad(s)\circ\iota$ and $\iota\circ\ad(s)$ are $M_2$-homotopic to the identity maps. This proves i). The surjection of part ii) is a monoid homomorphism by Lemma \ref{lem:samebox}; to prove that it is injective, it suffices to show that the composite of the corner inclusion $\cK\to M_2\cK$ with an isomorphism
$M_2\cK\cong\cK$ is homotopic to the identity. This follows from \cite{rostro}*{Proposition 1.3.4(i) and the implications (8.1.2)}. 
\end{proof}
\begin{bibdiv}  
\begin{biblist}
\comment{
\bib{ac}{article}{
author={Abadie, Beatriz},
author={Cortiñas, Guillermo},
title={Homotopy invariance through small stabilizations},
journal={J. Homotopy Relat. Struct.},
volume={10},
number={3},
pages={459--453},
year={2015},
}
}
\bib{quest}{article}{
   author={Abrams, G.},
   author={\'{A}nh, P. N.},
   author={Louly, A.},
   author={Pardo, E.},
   title={The classification question for Leavitt path algebras},
   journal={J. Algebra},
   volume={320},
   date={2008},
   number={5},
   pages={1983--2026},
   issn={0021-8693},
   review={\MR{2437640}},
   doi={10.1016/j.jalgebra.2008.05.020},
}
\comment{
\bib{amorir}{article}{
 AUTHOR = {Abrams, G. D.},
author={\'{A}nh, P. N.},
author={M\'{a}rki, L.},
     TITLE = {A topological approach to {M}orita equivalence for rings with
              local units},
   JOURNAL = {Rocky Mountain J. Math.},
    VOLUME = {22},
      YEAR = {1992},
    NUMBER = {2},
     PAGES = {405--416},
      ISSN = {0035-7596},
       DOI = {10.1216/rmjm/1181072737},
       URL = {https://doi.org/10.1216/rmjm/1181072737},
}
\bib{aap}{article}{
AUTHOR = {Abrams, G.},
author={Aranda Pino, G.},
     TITLE = {The {L}eavitt path algebras of arbitrary graphs},
   JOURNAL = {Houston J. Math.},
    VOLUME = {34},
      YEAR = {2008},
    NUMBER = {2},
     PAGES = {423--442},
      ISSN = {0362-1588},
			}
			}
\bib{lpabook}{book}{
author={Abrams, Gene},
author={Ara, Pere},
author={Siles Molina, Mercedes},
title={Leavitt path algebras}, 
date={2017},
series={Lecture Notes in Math.},
volume={2008},
publisher={Springer},
doi={$10.1007/978-1-4471-7344-1$},
}
\comment{
\bib{aalp}{article}{
  author={Abrams, Gene},
  author={Louly, Adel},
  author={\'Ahn, Pham Ngoc},
	author={Pardo, Enrique},
  title={The classification question for Leavitt path algebras},
	journal={J. Algebra},
	volume={320},
	date={2008},
	pages={1983--2026},
}
}
 \bib{flow}{article}{
   author={Abrams, Gene},
   author={Louly, Adel},
   author={Pardo, Enrique},
   author={Smith, Christopher},
   title={Flow invariants in the classification of Leavitt path algebras},
   journal={J. Algebra},
   volume={333},
   date={2011},
   pages={202--231},
   issn={0021-8693},
   review={\MR{2785945}},
}

\bib{abc}{article}{
   author={Ara, Pere},
   author={Brustenga, Miquel},
   author={Corti\~nas, Guillermo},
   title={$K$-theory of Leavitt path algebras},
   journal={M\"unster J. Math.},
   volume={2},
   date={2009},
   pages={5--33},
   issn={1867-5778},
   review={\MR{2545605}},

}
\bib{actenso}{article}{
 AUTHOR = {Ara, Pere},
author={Corti\~{n}as, Guillermo},
     TITLE = {Tensor products of {L}eavitt path algebras},
   JOURNAL = {Proc. Amer. Math. Soc.},
    VOLUME = {141},
      YEAR = {2013},
    NUMBER = {8},
     PAGES = {2629--2639},
      ISSN = {0002-9939},
       DOI = {10.1090/S0002-9939-2013-11561-3},
       URL = {https://doi.org/10.1090/S0002-9939-2013-11561-3},}

\comment{
\bib{gradstein}{article}{
  author={Ara, Pere},
	author={Hazrat, Roozbeh},
	author={Li, Huanhuan},
	author={Sims, Aidan},
	title={Graded Steinberg algebras and their representations}, 
	journal={Algebra Number Theory},
	volume={12},
	year={2018},
	pages={131--172},
}
\bib{k0pis}{article}{
   author={Ara, Pere},
	 author={Pardo, Enrique},
	 author={Goodearl, Kenneth R.},
	 title={$K_0$ of purely infinite simple regular rings},
   journal={$K$-theory},
	volume={26},
	 date={2002},
	pages={69--100},
}
}
\bib{black}{book}{
AUTHOR = {Blackadar, Bruce},
     TITLE = {{$K$}-theory for operator algebras},
    SERIES = {Mathematical Sciences Research Institute Publications},
    VOLUME = {5},
 PUBLISHER = {Springer-Verlag, New York},
      YEAR = {1986},
     PAGES = {viii+338},
      ISBN = {0-387-96391-X},
  review = {\MR{859867}},
       URL = {http://dx.doi.org/10.1007/978-1-4613-9572-0},
}

\bib{friendly}{article}{
   author={Corti\~nas, Guillermo},
   title={Algebraic v. topological $K$-theory: a friendly match},
   conference={
      title={Topics in algebraic and topological $K$-theory},
   },
   book={
      series={Lecture Notes in Math.},
      volume={2008},
      publisher={Springer, Berlin},
   },
   date={2011},
   pages={103--165},
   review={\MR{2762555}},
}

\bib{classinvo}{article}{
author={Corti\~nas, Guillermo},
title={Classifying Leavitt path algebras up to involution preserving homotopy},
eprint={arXiv:2101.05777},
}

\bib{cm1}{article}{
author={Corti\~nas, Guillermo},
author={Montero, Diego},
title={Algebraic bivariant $K$-theory and Leavitt path algebras},
journal={J. Noncommut. Geom.},
status={to appear},
eprint={arXiv:1806.09204},
}
\bib{cm2}{article}{
author={Corti\~nas, Guillermo},
author={Montero, Diego},
title={Homotopy classification of Leavitt path algebras},
journal={Adv. Math.},
volume={362},
date={2020}
}
\bib{wicris}{article}{
author={Corti\~nas, Guillermo},
author={Phillips, N. Christopher},
title={Algebraic $K$-theory and properly infinite $C^*$-algebras},
eprint={arXiv:1402.3197},
}
\comment{
\bib{cr}{article}{
   author={Corti\~nas, Guillermo},
   author={Rodr\'\i guez, Mar\'\i a Eugenia},
   title={$L^p$-operator algebras associated with oriented graphs},
   journal={J. Operator Theory},
   volume={81},
   date={2019},
   pages={225--254},
   }
}	
\bib{ct}{article}{
    AUTHOR = {Cortiñas, Guillermo},
    author={Thom, Andreas},
     TITLE = {Bivariant algebraic {$K$}-theory},
  JOURNAL = {J. Reine Angew. Math.},
    VOLUME = {610},
      YEAR = {2007},
     PAGES = {71--123},
      ISSN = {0075-4102},
       DOI = {10.1515/CRELLE.2007.068},
       URL = {https://doi.org/10.1515/CRELLE.2007.068},
}
\bib{acta}{article}{
 AUTHOR = {Corti\~{n}as, Guillermo},
author={Thom, Andreas},
     TITLE = {Algebraic geometry of topological spaces {I}},
   JOURNAL = {Acta Math.},
    VOLUME = {209},
      YEAR = {2012},
    NUMBER = {1},
     PAGES = {83--131},
      ISSN = {0001-5962},
       DOI = {10.1007/s11511-012-0082-6},
       URL = {https://doi.org/10.1007/s11511-012-0082-6},
}
\bib{cv}{article}{
author={Corti\~nas, Guillermo},
author={Vega, Santiago},
title={Hermitian bivariant $K$-theory and Karoubi's fundamental theorem},
eprint={arXiv:2012.09260},
}
\comment{
\bib{on}{article}{
AUTHOR = {Cuntz, Joachim},
     TITLE = {{$K$}-theory for certain {$C\sp{\ast} $}-algebras},
   JOURNAL = {Ann. of Math. (2)},
    VOLUME = {113},
      YEAR = {1981},
    NUMBER = {1},
     PAGES = {181--197},
      ISSN = {0003-486X},
       DOI = {10.2307/1971137},
       URL = {https://doi.org/10.2307/1971137},
}

\bib{chomo}{article}{
 AUTHOR = {Cuntz, J.},
     TITLE = {On the homotopy groups of the space of endomorphisms of a
              {$C\sp{\ast} $}-algebra (with applications to topological
              {M}arkov chains)},
 BOOKTITLE = {Operator algebras and group representations, {V}ol. {I}
              ({N}eptun, 1980)},
    SERIES = {Monogr. Stud. Math.},
    VOLUME = {17},
     PAGES = {124--137},
 PUBLISHER = {Pitman, Boston, MA},
      YEAR = {1984},
}
\bib{ck2}{article}{
AUTHOR = {Cuntz, J.},
     TITLE = {A class of {$C\sp{\ast} $}-algebras and topological {M}arkov
              chains. {II}. {R}educible chains and the {E}xt-functor for
              {$C\sp{\ast} $}-algebras},
   JOURNAL = {Invent. Math.},
    VOLUME = {63},
      YEAR = {1981},
    NUMBER = {1},
     PAGES = {25--40},
      ISSN = {0020-9910},
       DOI = {10.1007/BF01389192},
       URL = {https://doi.org/10.1007/BF01389192},
}
\bib{dritom}{article}{
AUTHOR = {Drinen, D.},
author={ Tomforde, M.},
     TITLE = {Computing {$K$}-theory and {${\rm Ext}$} for graph
              {$C^*$}-algebras},
   JOURNAL = {Illinois J. Math.},
    VOLUME = {46},
      YEAR = {2002},
    NUMBER = {1},
     PAGES = {81--91},
      ISSN = {0019-2082},
       URL = {http://projecteuclid.org/euclid.ijm/1258136141},

}
\bib{fawag}{article}{
 AUTHOR = {Farrell, F. T.},
author={Wagoner, J. B.},
     TITLE = {Infinite matrices in algebraic {$K$}-theory and topology},
   JOURNAL = {Comment. Math. Helv.},
    VOLUME = {47},
      YEAR = {1972},
     PAGES = {474--501},
      ISSN = {0010-2571},
       DOI = {10.1007/BF02566819},
       URL = {https://doi.org/10.1007/BF02566819},
			}

\bib{splice}{article}{
AUTHOR = {Johansen, Rune},
author={S\o rensen, Adam P. W.},
     TITLE = {The {C}untz splice does not preserve {$\ast$}-isomorphism of
              {L}eavitt path algebras over {$\mathbb{Z}$}},
   JOURNAL = {J. Pure Appl. Algebra},
    VOLUME = {220},
      YEAR = {2016},
    NUMBER = {12},
     PAGES = {3966--3983},
      ISSN = {0022-4049},
       DOI = {10.1016/j.jpaa.2016.05.023},
       URL = {https://doi.org/10.1016/j.jpaa.2016.05.023},
}
}
\bib{kamiput}{article}{
author={Kaminker, Jerome},
author={Putnam, Ian},
title={$K$-theoretic duality for shifts of finite type},
journal={Commun. Math. Phys.},
volume={187},
year={1997},
pages={505--541},
}
\bib{kardisc}{article}{
 AUTHOR = {Karoubi, Max},
     TITLE = {Homologie de groupes discrets associ\'{e}s \`a des alg\`ebres
              d'op\'{e}rateurs},
      NOTE = {With an appendix in English by Wilberd van der Kallen},
   JOURNAL = {J. Operator Theory},
    VOLUME = {15},
      YEAR = {1986},
    NUMBER = {1},
     PAGES = {109--161},
      ISSN = {0379-4024},
}
\bib{kv1}{article}{
    AUTHOR = {Karoubi, Max},
    author={Villamayor, Orlando},
		TITLE = {{$K$}-th\'{e}orie alg\'{e}brique et {$K$}-th\'{e}orie topologique. {I}},
   JOURNAL = {Math. Scand.},
    VOLUME = {28},
      YEAR = {1971},
     PAGES = {265--307 (1972)},
      ISSN = {0025-5521},
       DOI = {10.7146/math.scand.a-11024},
       URL = {https://doi.org/10.7146/math.scand.a-11024},
			}
\comment{			
\bib{kv2}{article}{
    AUTHOR = {Karoubi, Max},
    author={Villamayor, Orlando},
     TITLE = {{$K$}-th\'{e}orie alg\'{e}brique et {$K$}-th\'{e}orie topologique. {II}},
   JOURNAL = {Math. Scand.},
    VOLUME = {32},
      YEAR = {1973},
     PAGES = {57--86},
      ISSN = {0025-5521},
       DOI = {10.7146/math.scand.a-11446},
       URL = {https://doi.org/10.7146/math.scand.a-11446},
       }
			
			\bib{loday}{article}{
			  AUTHOR = {Loday, Jean-Louis},
     TITLE = {{$K$}-th\'{e}orie alg\'{e}brique et repr\'{e}sentations de groupes},
   JOURNAL = {Ann. Sci. \'{E}cole Norm. Sup. (4)},
    VOLUME = {9},
      YEAR = {1976},
    NUMBER = {3},
     PAGES = {309--377},
      ISSN = {0012-9593},
       URL = {http://www.numdam.org/item?id=ASENS_1976_4_9_3_309_0},
}
			}
\bib{ralf}{article}{
 AUTHOR = {Meyer, Ralf},
     TITLE = {Universal coefficient theorems and assembly maps in
              {$KK$}-theory},
 BOOKTITLE = {Topics in algebraic and topological {$K$}-theory},
    SERIES = {Lecture Notes in Math.},
    VOLUME = {2008},
     PAGES = {45--102},
 PUBLISHER = {Springer, Berlin},
      YEAR = {2011},
       DOI = {10.1007/978-3-642-15708-0\_2},
       URL = {https://doi.org/10.1007/978-3-642-15708-0_2},
}
\bib{chris}{article}{
AUTHOR = {Phillips, N. Christopher},
     TITLE = {A classification theorem for nuclear purely infinite simple
              {$C^*$}-algebras},
   JOURNAL = {Doc. Math.},
    VOLUME = {5},
      YEAR = {2000},
     PAGES = {49--114},
      ISSN = {1431-0635},
}
\comment{
\bib{ranisemi}{article}{
author={Ranicki, Andrew},
title={On the algebraic $L$-theory of semisimple rings},
journal={J. Algebra},
volume={50},
number={1},
year={1978},
pages={242--243},
}
}
\bib{ror}{article}{
  title={Classification of Cuntz-Krieger algebras},
  author={R{\o}rdam, Mikael},
  journal={K-theory},
  volume={9},
  number={1},
  pages={31--58},
  year={1995},
  publisher={Springer}
}
\bib{rostro}{book}{
AUTHOR = {R\o rdam, Mikael},
author={ St\o rmer, Erling},
     TITLE = {Classification of nuclear {$C^*$}-algebras. {E}ntropy in
              operator algebras},
    SERIES = {Encyclopaedia of Mathematical Sciences},
    VOLUME = {126},
      NOTE = {Operator Algebras and Non-commutative Geometry, 7},
 PUBLISHER = {Springer-Verlag, Berlin},
      YEAR = {2002},
     PAGES = {x+198},
      ISBN = {3-540-42305-X},
       DOI = {10.1007/978-3-662-04825-2},
       URL = {https://doi.org/10.1007/978-3-662-04825-2},
}
\bib{rosop}{article}{
AUTHOR = {Rosenberg, Jonathan},
     TITLE = {The algebraic {$K$}-theory of operator algebras},
   JOURNAL = {$K$-Theory},
    VOLUME = {12},
      YEAR = {1997},
    NUMBER = {1},
     PAGES = {75--99},
      ISSN = {0920-3036},
       DOI = {10.1023/A:1007736420938},
       URL = {https://doi.org/10.1023/A:1007736420938},
			}
\bib{roshand}{article}{
 AUTHOR = {Rosenberg, Jonathan},
     TITLE = {Comparison between algebraic and topological {$K$}-theory for
              {B}anach algebras and {$C^*$}-algebras},
 BOOKTITLE = {Handbook of {$K$}-theory. {V}ol. 1, 2},
     PAGES = {843--874},
 PUBLISHER = {Springer, Berlin},
      YEAR = {2005},
       DOI = {10.1007/978-3-540-27855-9\_16},
       URL = {https://doi.org/10.1007/978-3-540-27855-9_16},
}
\comment{
\bib{marcofest}{article}{
author={Schilichting, Marco},
title={Hermitian $K$-theory, derived equivalences and Karoubi's fundamental theorem},
journal={J. Pure Appl. Algebra},
volume={221},
number={7},
year={2017},
pages={1729--1844},
doi={https://doi.org/10.1016/j.jpaa.2016.12.026},
}
}
\bib{sw}{article}{
AUTHOR = {Suslin, Andrei A.},
author={Wodzicki, Mariusz},
     TITLE = {Excision in algebraic {$K$}-theory},
   JOURNAL = {Ann. of Math. (2)},
    VOLUME = {136},
      YEAR = {1992},
    NUMBER = {1},
     PAGES = {51--122},
      ISSN = {0003-486X},
       DOI = {10.2307/2946546},
       URL = {https://doi.org/10.2307/2946546},
}
\comment{
\bib{santi}{thesis}{
author={Vega, Santiago},
title={Hermitian bivariant $K$-theory},
type={PhD thesis},
organization={Universidad de Buenos Aires},
year={2021},
}

\bib{walter}{article}{
author={Walter, Charles},
title={Grothendieck-Witt groups of triangulated categories},
status={preprint},
eprint={https://faculty.math.illinois.edu/K-theory/0643/},
}
}
\bib{kh}{article}{
   author={Weibel, Charles A.},
   title={Homotopy algebraic $K$-theory},
   conference={
      title={Algebraic $K$-theory and algebraic number theory (Honolulu, HI,
      1987)},
   },
   book={
      series={Contemp. Math.},
      volume={83},
      publisher={Amer. Math. Soc.},
      place={Providence, RI},
   },
   date={1989},
   pages={461--488},
   review={\MR{991991 (90d:18006)}},
}

\end{biblist}
\end{bibdiv}

\end{document}